\newcommand{\NN}{\ensuremath{\mathbb{N}}}
\newcommand{\RR}{\ensuremath{\mathbb{R}}}
\newcommand{\TT}{\ensuremath{\mathbb{T}}}
\newcommand{\ZZ}{\ensuremath{\mathbb{Z}}}
\newcommand{\eps}{\ensuremath{\epsilon}}
\newcommand{\aA}{\ensuremath{\mathcal{A}}}
\newcommand{\xX}{\ensuremath{\mathcal{X}}}
\newcommand{\yY}{\ensuremath{\mathcal{Y}}}
   \newtheorem{lemma}{Lemma}[section]
   \newtheorem{theorem}[lemma]{Theorem}
   \newtheorem{remark}[lemma]{Remark}
   \newtheorem{definition}[lemma]{Definition}
\numberwithin{equation}{section}
\newcommand{\e}{\epsilon}
\newcommand{\R}{{\mathbb R}}
\begin{document}

\subjclass[2000]{Primary: 35Q30; Secondary: 35B65, 35B41, 76D05}
\keywords{Navier-Stokes equations, constant delay, local attractors, discrete and continuous dynamical systems, regularity of solutions, attractors}

\title[Longtime behavior for 3D Navier-Stokes equations with constant delays]
{Longtime behavior for 3D Navier-Stokes equations with constant delays}

\author{Hakima Bessaih}
 \address[Hakima Bessaih]{Department of Mathematics and Statistics\\
 University of Wyoming\\
Laramie 82071 USA}
\email[Hakima Bessaih]{bessaih@uwyo.edu}

\author{Mar\'{\i}a J. Garrido-Atienza}\address[Mar\'{\i}a J. Garrido-Atienza]{Dpto. Ecuaciones Diferenciales y An\'alisis Num\'erico\\
Facultad de Matem\'aticas, Universidad de Sevilla, Avda. Reina Mercedes, s/n, 41012-Sevilla, Spain} \email[Mar\'{\i}a J. Garrido-Atienza]{mgarrido@us.es}

\begin{abstract} 
This paper investigates the longtime behavior of delayed 3D Navier-Stokes equations in terms of attractors. The study will strongly rely on the investigation of the linearized Navier-Stokes system, and the relationship between the discrete dynamical flow for the linearized system and the continuous flow associated to the original system. Assuming the viscosity to be sufficiently large, there exists a unique local attractor for the delayed 3D Navier-Stokes equations. Moreover, the local attractor reduces to a singleton set.
\end{abstract}

\maketitle

\section*{}

\section{Introduction}\label{s1} 
The incompressible 3D Navier-Stokes equations are described by time evolution of the velocity $u$ in a bounded or unbounded domain of $\mathbb{R}^3$  and are given by:
\begin{align*}
 u^\prime(t,x) & +(u(t,x)\cdot\nabla) u(t,x)-\nu\Delta u(t,x)+\nabla p(t,x)=0, \\
   & {\rm div}\, u(t,x)=0,\quad u(0,x)=u_0(x),
\end{align*}
where $\nu>0$ is the viscosity of the fluid, $p$ denotes the pressure and $u_0(x)$ denotes the initial datum.  The uniqueness of global weak solutions is a standing open problem. In order to overcome this challenging difficulty, in a previous paper, see \cite{BGSch17},  we introduced a constant delay $\mu>0$ into the nonlinear term   $(u\, \cdot\, \nabla) u$. More precisely, we considered the following modified version of the 3D Navier-Stokes equations: 
 
\begin{align}\label{delay-i}
\begin{split}
  u^\prime(t,x) & +(u(t-\mu, x)\cdot\nabla)u(t,x)-\nu\Delta u(t,x)+\nabla p(t,x)=f(x), \\
   & {\rm div}\, u(t,x)=0,\quad u(0,x)=u_0(x),\quad u(\tau,x)=\phi(\tau,x),\quad \tau\in [-\mu,0).
\end{split}
\end{align}

This delay introduces a regularizing  effect in the equations and allows to prove the uniqueness of global weak solutions when the 
initial function $(\phi,u_0)\in L_2(-\mu,0,V^{1+\alpha})\times V^\alpha$ with $\alpha>1/2$ (for the definition of the spaces $V^\alpha$ see Section \ref{s2}). In particular, when $\alpha\geq1$, then our theory can be extended to include strong solutions. The main ingredient to establish it is to use the regularizing effect of the delay on the convective term by investigating the linearized version of \eqref{delay-i}. This equation comes naturally when investigating  the system on the interval $[0,\mu] $. We prove existence and uniqueness of weak solutions, then we establish that these solutions are  more regular and are in the spaces $V^\alpha$. Then, we use a concatenation argument by glueing the solutions obtained on each interval $[0,\mu], [\mu, 2\mu],\dots $ and so on. Each solution is obtained from the previous step and uses the linearized construction. 

As a byproduct, the linearized equation induces a continuous mapping $U$ on the space $L_2(0, \mu,V^{1+\alpha})\times V^\alpha$. The $n$th composition of the map $U$ generates a discrete semigroup $U(n)$ on the same space $L_2(0, \mu,V^{1+\alpha})\times V^\alpha$. Moreover, thanks to the concatenation argument the solution of  system \eqref{delay-i} generates a continuous semigroup 
$S(t)$ on the space  $L_2(-\mu,0,V^{1+\alpha})\times V^\alpha$ for $t\geq 0$
given by $S(t)(\phi,u_0)=(u_t^\mu, u^\mu(t))$, 
where $u_t^\mu$ is the segment function defined by $u_t^\mu(s)=u^\mu(t+s)$, $s\in (-\mu,0)$ ($u^\mu$ denotes the solution of (\ref{delay-i})), defined in more details in Section  2.

Our goal in this paper is to study the longtime behavior of (\ref{delay-i}) in terms of attractors. Let us point out that the existence of a local attractor for (\ref{delay-i}) is essentially based on getting an invariant ball for the map $U$.  The main ingredient that allows to get the invariance of a bounded ball for the discrete semigroup $U(n)$ and later the semigroup $S(t)$ is the fact that the unique weak global solution is more regular, see Lemma 3.1, 3.2 and 3.3. Combined with compactness embeddings, this allows to prove that the local invariant ball for the discrete semigroup $U(n)$ is compact in the topology of $L_2(0, \mu,V^{1+\alpha})\times V^\alpha$. 
An interesting feature of this model is that we are able to establish that the local attractor $\mathcal{A}$ associated to the discrete semigroup $U(n)$ is a single point attractor. These properties are transferred to the original delayed 3D Navier-Stokes equations, due to the key relationship between $U$ and $S$. In fact, under the same conditions as for the discrete flow $U$, the continuous flow $S$ is proved to have a local attractor $\mathcal{A}^\mu$, that reduces to a singleton set and is linked to the local attractor $\mathcal{A}$ as $S(t)$ and $U$ are related on the grid points $t=n\mu$, $n\in \NN$.\\

The paper is organized as follows. In Section 2 we introduce the abstract setting in which we develop our theory and recall how the construction of the unique weak solution of (\ref{delay-i}) was carried out in the paper \cite{BGSch17}, by using a suitable linearization of (\ref{delay-i}) on $[0,\mu]$. Section 3 addresses the regularization properties of the solution of (\ref{delay-i}) assuming that the external force is in $V^\alpha$. In Section 4, we first consider a linearized system defined now on any compact interval $[0,T]$ for a given $T>0$ and construct its corresponding unique weak solution. Then we establish a fundamental relationship between the discrete flow $U$ generated by the solution of the linearized system and the continuous flow $S$ generated by the solution of (\ref{delay-i}). Section 5 is devoted to the study of the local attractor for the linearized system and finally, in Section 6, we establish the existence of a unique local attractor for $S$ and we study its inner structure. 

\section{Preliminaries: existence and uniqueness of a weak solution}\label{s2}

We introduce in this section the functional setting in which our investigations will be carried out and the existence and uniqueness of solutions of the delayed Navier-Stokes equations as well. 

Consider the torus $\TT^3_L$ in $\R^3$ of length $L$ given by the set 
$$\TT^3_L:=\{ x=(x_1,x_2,x_3)\in \RR^3 \,:\, -L/2\leq x_i \leq L/2; \, x_i=-L/2 \mbox{ is identified with  } x_i=L/2, \, i=1,2,3\}.$$ Let $\psi(x)$ be a $L$-periodic function that can be expanded into Fourier series
$$\psi(x)=\sum_{\zeta\in \ZZ^3_L} e^{i(x,\zeta)} \hat \psi(\zeta),$$
where
$$\ZZ^3_L=\{ \zeta=(\zeta_1,\zeta_2,\zeta_3)\,:\, \zeta_i=2\pi k_i/L,\, k_i \mbox { is an integer}, \, i=1,2,3\},$$
and
$$\hat \psi(\zeta)=L^{-3} \int_{\TT^3_L} e^{-(y,\zeta)}\psi(y)dy$$
denote the Fourier coefficients of $\psi$.

For $s\in \RR$, we denote by $H^s(\TT^3_L)$ the Sobolev space of $L$--periodic functions such that $\hat \psi(\zeta)=\overline{\hat \psi(-\zeta)}$ equipped with the norm
$$\|\psi\|_s= \bigg(\sum_{\zeta\in \ZZ^3_L} (1+|\zeta|^2)^s |\hat \psi(\zeta)|^2\bigg)^\frac12.$$
When $\hat \psi(0)=0$ the corresponding subspace is denoted by $\dot H^s(\TT^3_L)$ with equivalent norm
$$\bigg(\sum_{\zeta\in \ZZ^3_L\setminus \{0\}} |\zeta|^{2s} |\hat \psi(\zeta)|^2\bigg)^{1/2}.$$
These spaces are Hilbert--spaces
with the inner product
\begin{equation*}
  (\psi_1,\psi_2)_s =\sum_{\zeta\in \ZZ^3_L\setminus \{0\}}|\zeta|^{2s} \psi_1(\zeta)\overline{\hat\psi_2(\zeta)}.
\end{equation*}
\medskip
We denote $\dot{\mathbb H}^s(\mathbb T_L^3)=\dot H^s(\TT^3_L)^3$ and, for $s=-1,0,1$, we introduce the spaces
\begin{align*}
V^s=&\{u\in \dot{\mathbb H}^s(\mathbb T_L^3), \, \rm{div}\ u=0 \}.
\end{align*}
Then $V^{-1}$ is the dual space of $V^1$ and $V^1\subset V^0 \subset V^{-1}$ where the injections are continuous and each space is dense in the following one. We shall denote by $(\cdot,\cdot)$ the scalar product in $V^0$.

We introduce the Stokes operator $A$ as in \cite{temamP}, Section 2.2, page 9, with domain given by
$$D(A)=\{u\in V^0, \, \Delta u\in V^0 \}.$$
For the periodic boundary conditions we know that
\begin{equation*}
  Au=-\Delta u.
\end{equation*}
The operator $A$ can be seen as an unbounded positive linear selfadjoint operator on $V^0$, and we can define the powers $A^s$, $s\in \RR$ with domain $D(A^s)$. We set $V^{s}=D(A^{s/2})$, that is a closed subspace of $\dot {\mathbb H}^s(\TT_L^3)$, then for any $s\in \RR$
$$V^s=\{u\in \dot{\mathbb H}^s(\mathbb T_L^3), \, \rm{div}\ u=0 \}$$
and the norms $\|A^{s/2}u\|_0$ and $\|u\|_s$ are equivalent on $V^s$.  The operator $A$ defines an isomorphism from $V^s$ to $V^{s-2}$, and has a positive countable spectrum of finite multiplicity $0<\lambda_1\le \lambda_2\le \cdots$, $\lambda_j \to \infty$, where the associated eigenvectors $e_1,e_2,\cdots$ form a complete orthogonal system in $V^s$. 

When  $s_1<s_2$,  the embedding $V^{s_2}\subset V^{s_1}$ is compact and dense. The space $V^{-s}$ is the dual space of $V^s$ for $s\in\RR$, see Temam \cite{temamP}, from page 9. We shall denote by $\langle \cdot, \cdot \rangle$ the duality product between $V^s$ and $V^{-s}$ no matter the value of $s\in \RR$.\\

Let us introduce the trilinear form $b$ given by
\begin{equation*}
  b(u,v,w)=\sum_{i,j=1}^3\int_{\TT_L^3} u_j \frac{\partial v_i}{\partial x_j}w_idx.
\end{equation*}

The following result is essential in our estimates. For the proof, we refer to \cite{BGSch17}.

\begin{lemma}\label{l2}
The trilinear form $b$ can be continuously  extended to $V^{s_1}\times V^{s_2+1}\times V^{s_3}$  for $s_i\in\RR$ if
either $s_i+s_j\ge 0$ for $i\not=j$, $s_1+s_2+s_3>3/2$ or $s_i+s_j> 0$ for $i\not=j$, $s_1+s_2+s_3\ge 3/2$. Therefore, under either of the previous settings, there exists a constant $c$ depending only on $s_i$ such that
\begin{equation*}
  |b(u,v,w)|\le c\|u\|_{s_1}\|v\|_{s_2+1}\|w\|_{s_3}
\end{equation*}
for $u\in V^{s_1},\,v\in V^{s_2+1},\,w\in V^{s_3}$.
\end{lemma}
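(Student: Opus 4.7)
The plan is to reduce the trilinear estimate on $b$ to a bilinear Sobolev multiplication estimate via a duality argument. Writing
\begin{equation*}
b(u,v,w)=\langle (u\cdot\nabla)v,\, w\rangle,
\end{equation*}
viewed as the $V^{-s_3}$--$V^{s_3}$ duality pairing (with $(u\cdot\nabla)v$ interpreted distributionally when $s_3<0$), one obtains $|b(u,v,w)|\le \|(u\cdot\nabla)v\|_{-s_3}\|w\|_{s_3}$. The lemma is thus reduced to the bilinear Sobolev estimate
\begin{equation*}
\|fg\|_{-s_3}\le c\,\|f\|_{s_1}\|g\|_{s_2},
\end{equation*}
applied componentwise with $f=u$ and $g=\nabla v$, noting that $\|\nabla v\|_{s_2}$ is equivalent to $\|v\|_{s_2+1}$ on the mean-zero periodic functions.

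The core tool is the standard multiplication law on the three-torus: for real numbers $\sigma,\sigma_1,\sigma_2$ satisfying $\sigma\le\sigma_1$, $\sigma\le\sigma_2$, $\sigma_1+\sigma_2\ge 0$, and either $\sigma_1+\sigma_2-\sigma>3/2$ or $\sigma_1+\sigma_2-\sigma\ge 3/2$ together with strictness in the previous three inequalities, one has $\|fg\|_{\sigma}\le c\|f\|_{\sigma_1}\|g\|_{\sigma_2}$. Plugging in $\sigma=-s_3$, $\sigma_1=s_1$, $\sigma_2=s_2$, these conditions translate exactly into $s_1+s_3\ge 0$, $s_2+s_3\ge 0$, $s_1+s_2\ge 0$ together with $s_1+s_2+s_3>3/2$ (or the strict/non-strict swap), which are precisely the two alternative hypothesis sets stated in the lemma.

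To establish the multiplication law itself I would argue in two stages. When $\sigma_1,\sigma_2\ge 0$ and $\sigma\le 0$, apply H\"older's inequality with exponents $p_1,p_2,p_3$ obeying $1/p_1+1/p_2+1/p_3=1$ and use the Sobolev embeddings $\|u\|_{L^p(\TT_L^3)}\le c\|u\|_{s}$, valid with $1/p=1/2-s/3$ when $0\le s<3/2$, any finite $p$ at $s=3/2$, and $L^\infty$ for $s>3/2$; the scale-counting identity reproduces $\sigma_1+\sigma_2-\sigma=3/2$. For indices of mixed sign, use duality: for a test function $\varphi\in \dot H^{-\sigma}(\TT_L^3)$ one writes $(fg,\varphi)=(g,f\varphi)$ and estimates $\|f\varphi\|_{-\sigma_2}$ via the previously established nonnegative-index case, reducing every sign configuration to one already treated. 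Finally, the vectorial and divergence-free structure plays no role in the estimate, since the norm on $V^s$ is inherited from $\dot H^s$. The main obstacle lies in the careful tracking of strict versus non-strict inequalities at the endpoints $s=0$ and $s=3/2$ of the Sobolev embeddings, which is exactly what dictates the dichotomous hypothesis structure of the lemma.
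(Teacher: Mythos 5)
Your overall reduction (duality in the $w$-slot, the equivalence of $\|\nabla v\|_{s_2}$ with $\|v\|_{s_2+1}$, and a two-index product law whose hypotheses translate exactly into the two alternative hypothesis sets of the lemma) is the right frame; note that the paper itself gives no proof but defers to \cite{BGSch17} and the related results of Fursikov and Temam, which follow this same product-estimate route. The genuine gap is in your proof of the product law $\|fg\|_{\sigma}\le c\|f\|_{\sigma_1}\|g\|_{\sigma_2}$. Your Stage 1 (H\"older plus Sobolev embeddings) only covers the configuration $\sigma_1,\sigma_2\ge 0$, $\sigma\le 0$, and your Stage 2 duality does not reduce the remaining configurations to it: if $\sigma>0$ (equivalently $s_3<0$, which forces $\sigma_1\ge\sigma>0$ and $\sigma_2\ge\sigma>0$), writing $(fg,\varphi)=(g,f\varphi)$ requires the estimate $\|f\varphi\|_{-\sigma_2}\le c\|f\|_{\sigma_1}\|\varphi\|_{-\sigma}$, which is a product estimate with one negative factor index and negative target; dualizing that one once more brings you back to the original positive-target estimate. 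Thus the two cases not covered by Stage 1 merely swap into each other under duality, and neither reaches the "already treated" case --- the argument is circular precisely in the case the paper actually uses (every application takes $s_3=-\alpha<0$, so $(u\cdot\nabla)v$ must be estimated in the positive-order space $V^{\alpha}$).

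What is missing is a mechanism to distribute the positive-order weight between the two factors. In the periodic setting this is done directly on Fourier series: from $\widehat{fg}(\zeta)=\sum_{\eta}\hat f(\eta)\,\hat g(\zeta-\eta)$ and the elementary inequality $|\zeta|^{\sigma}\le c_\sigma\,(|\eta|^{\sigma}+|\zeta-\eta|^{\sigma})$ for $\sigma>0$, one bounds $\|fg\|_{\sigma}$ by two terms in each of which the full weight sits on a single factor, and each term is then an instance of your Stage 1 with target index $0$, applied to the auxiliary functions whose Fourier coefficients are $|\hat f(\zeta)|$ and $|\hat g(\zeta)|$ (alternatively, invoke a fractional Leibniz/Kato--Ponce or paraproduct estimate). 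With that ingredient added, your endpoint bookkeeping (the $L^\infty$ versus ``any finite $p$'' dichotomy at exponent $3/2$, which generates the strict/non-strict alternatives) does yield the two hypothesis sets of the lemma; without it, the central case of the lemma remains unproved.
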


Notice that similar results were proved by Fursikov \cite{Fursikov} when considering a bounded domain $\Omega \subset \RR^3$, $\partial \Omega \in C^\infty$ with homogeneous Dirichlet conditions, but with more restrictive assumptions. In the periodic boundary setting, for a similar result as Lemma \ref{l2} above see also Temam \cite{temamP}, Lemma 2.1, page 12, which holds true under the additional assumptions $s_i\geq 0$.

When $u,\,v,\,w\in V^1$ it is known that $ b(u,v,w)=-b(u,w,v)$, which implies $b(u,v,v)=0.$ Furthermore, from the trilinear form $b$ we can derive a bilinear operator $B:V^{s_1}\times V^{s_2+1} \rightarrow V^{-s_3}$ given by
\begin{align*}
& \langle B(u,v),w\rangle = b(u,v,w),
\end{align*}
such that
\begin{align}\label{eq7}
\begin{split}
& \|B(u,v)\|_{-s_3}\le c \|u\|_{s_1}\|v\|_{s_2+1}
\end{split}
\end{align}
with $s_1,\,s_2,\,s_3$ satisfying the conditions of  Lemma \ref{l2}. \medskip

Finally we mention that for $\mu>0$ and $s\in \RR$ the spaces $L_\infty(0,\mu,V^s),\,L_2(0,\mu,V^s),\,C([0,\mu],V^s)$ and $C^\beta([0,\mu],V^s),\,\beta\in (0,1)$, have
the usual meanings. 

We are interested in studying the dynamics of the following version of the 3D Navier-Stokes equations with constant delay $\mu$:
\begin{align}\label{delay}
\begin{split}
  u^\prime(t,x) & +(u(t-\mu),\nabla)u(t)-\nu\Delta u(t)+\nabla p(t,x)=f(x), \\
   & {\rm div}\, u(t,x)=0,\quad u(0)=u_0(x),\quad u(t)=\phi(t),\quad t\in [-\mu,0).
\end{split}
\end{align}
Denote the solution of this equation depending on the time shift by $u^\mu$. On account of the Helmholtz-projection,  we can formulate the equation as
 \begin{equation}\label{D2}
  \left\{
  \begin{aligned}
    du^\mu(t)&+(\nu Au^\mu(t)+B(u^\mu(t-\mu), u^\mu(t)))dt=fdt ,\,&t \geq 0,&\\
    u^\mu(0)&=u_{0},\\
    u^\mu(t)&=\phi (t),\,&t\in[-\mu,0).
  \end{aligned}
  \right.
\end{equation}
\begin{definition}\label{def-deb}
Let $\mu>0$ and $\alpha>1/2$.
We are given $u_0\in {V^\alpha}, \phi \in L^2(-\mu, 0, V^{1+\alpha})$,  $f\in V^{{\alpha-1}}$ and $T>0$.
We say that $u^\mu$ is a weak solution to system \eqref{D2}
on the time interval $[-\mu,T]$ if
\begin{align*}
u^\mu\in L_2(-\mu,T, V^{1+\alpha}),
\end{align*}
with $u^\mu(0)=u_0$, $u^\mu(t)=\phi (t)$ for $t\in[-\mu,0)$, and, given any $v\in V^{\alpha+1}$ and any test function $\varphi \in  C_0^\infty([0,T])$,
\begin{align}\label{var-v}
  -\int_0^T \langle u^\mu(r),v \rangle \varphi^\prime(r)dr  &+\nu\int_{0}^{T}\langle A^{1/2}u^\mu(r), A^{1/2} v\rangle \varphi(r) dr +\int_{0}^{T}\langle B(u^\mu(r-\mu),u^\mu(r)),v \rangle \varphi(r) dr
\nonumber\\
&= \int_{0}^{T}\langle f,v \rangle \varphi(r)dr.
\end{align}
\end{definition}

In order to prove the existence and uniqueness of solutions to (\ref{var-v}), for $t \in [0,\mu]$ and $\psi \in  L_2(0,\mu,V^{1+\alpha})$, we introduce the following 3D linearized Navier--Stokes equations with periodic boundary conditions over the torus  $\TT_L^3$ in $\RR^3$
 \begin{equation}\label{eq1}
  \left\{
  \begin{aligned}
    du(t)&+(\nu Au (t)+B(\psi(t), u(t)))dt=fdt ,\quad&t \in [0,\mu],&\\
    u(0)&=u_{0}.
  \end{aligned}
  \right.
\end{equation}
These equations are a simpler version of the 3D Navier--Stokes equations, since the term $(u,\nabla)u$ has been replaced by $(\psi,\nabla)u$. The existence and uniqueness of solutions to (\ref{D2}) and (\ref{eq1}) can be summarized as follows. 

For the sake of readability, the solutions of (\ref{D2}) will be denoted by $u^\mu$ while the corresponding solutions to  (\ref{eq1}) are denoted by $u$.

\begin{theorem}\label{t1}
Assume that $u_0\in V^\alpha$ and $f\in V^{\alpha-1}$. Then
\begin{enumerate}
\item If $\psi \in  L_2(0,\mu,V^{1+\alpha})$, \eqref{eq1} has a weak solution $u\in L_\infty(0,\mu,V^\alpha)\cap L_2(0,\mu,V^{1+\alpha})\cap C([0,\mu],V^\alpha)$.
\item If $\phi\in L_2(-\mu,0,V^{1+\alpha})$, there exists a unique weak solution $u^\mu$ to \eqref{var-v} in the sense of Definition \ref{def-deb}. Furthermore, $u^\mu|_{[0,T]}\in C([0,T], V^\alpha)$ and, if $0\leq \gamma \leq 1/2$ and $s\geq 1$, we also obtain $u^\mu|_{[0,T]}\in L_\infty(0,T,V^\alpha)\cap C^\gamma([0,T],V^{-s})$, and $\frac{du^\mu}{dt}\in L_2(0,T,V^{\alpha-1})$.
\end{enumerate}
\end{theorem}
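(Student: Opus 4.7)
The plan for (1) is a Galerkin approximation with energy estimates in $V^\alpha$. For (2), I would iterate (1) on the intervals $I_k=[k\mu,(k+1)\mu]$ and glue the pieces together.

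For (1), let $u_n\in\spn\{e_1,\ldots,e_n\}$ solve the projected linear equation, and take the $V^\alpha$-inner product with $u_n$. The only delicate term is the convective one, for which Lemma \ref{l2} with the indices $s_1=1+\alpha$, $s_2=\alpha$, $s_3=-\alpha$ (here $s_2+s_3=0$ is not strict, so one needs the variant requiring $s_1+s_2+s_3>3/2$, which is exactly $\alpha>1/2$) gives
\begin{equation*}
|b(\psi,u_n,A^\alpha u_n)|\le c\|\psi\|_{1+\alpha}\|u_n\|_{1+\alpha}\|u_n\|_\alpha.
\end{equation*}
Young's inequality absorbs a fraction of the dissipation, yielding
\begin{equation*}
\frac{d}{dt}\|u_n\|_\alpha^2+\nu\|u_n\|_{1+\alpha}^2\le c_\nu\|f\|_{\alpha-1}^2+c_\nu\|\psi\|_{1+\alpha}^2\|u_n\|_\alpha^2,
\end{equation*}
to which Gronwall applies (using $\|\psi\|_{1+\alpha}^2\in L_1(0,\mu)$), producing uniform bounds in $L_\infty(0,\mu,V^\alpha)\cap L_2(0,\mu,V^{1+\alpha})$. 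Bounding $du_n/dt$ in $L_2(0,\mu,V^{\alpha-1})$ via \eqref{eq7} and invoking Aubin--Lions compactness then delivers a weak limit $u$ of \eqref{eq1} in the variational sense; a Lions--Magenes interpolation argument upgrades it to $u\in C([0,\mu],V^\alpha)$.

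For (2), I would iterate (1). On $I_0$ set $\psi(t)=\phi(t-\mu)\in L_2(0,\mu,V^{1+\alpha})$ with initial datum $u_0$. Inductively, on $I_k$ set $\psi(t)=u^\mu(t-\mu)$, which belongs to $L_2(k\mu,(k+1)\mu,V^{1+\alpha})$ by the previous step, with initial datum $u^\mu(k\mu)\in V^\alpha$ coming from the $C(I_{k-1},V^\alpha)$ regularity. Gluing yields a global solution continuous at each junction by construction. Uniqueness is immediate: the difference of two weak solutions satisfies, on each $I_k$, the \emph{linear} homogeneous equation obtained by freezing $\psi$, whose $V^\alpha$ energy identity combined with Gronwall forces it to vanish. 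Finally, to obtain the H\"older regularity with values in $V^{-s}$, I show $du^\mu/dt\in L_2(0,T,V^{\alpha-1})$: the dissipation term is trivially in this space, while \eqref{eq7} with $s_1=1+\alpha,\,s_2=\alpha-1,\,s_3=1-\alpha$ (again using $\alpha>1/2$ via the strict-sum condition) gives $\|B(u^\mu(\cdot-\mu),u^\mu)\|_{\alpha-1}\le c\|u^\mu(\cdot-\mu)\|_{1+\alpha}\|u^\mu\|_\alpha$, which lies in $L_2(0,T)$ by pairing $L_2(0,T,V^{1+\alpha})$ with $L_\infty(0,T,V^\alpha)$. The embedding $H^1(0,T,V^{\alpha-1})\hookrightarrow C^{1/2}([0,T],V^{\alpha-1})\hookrightarrow C^\gamma([0,T],V^{-s})$ for $s\ge 1$ closes the argument.

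The main obstacle I anticipate is the bookkeeping of indices in Lemma \ref{l2}: the three applications (energy, time-derivative, uniqueness) must all close within the single constraint $\alpha>1/2$ while respecting the pairwise and strict-sum conditions, with no circular dependence on unjustified regularity. A secondary issue is verifying that the concatenated solution genuinely lies in $C([0,T],V^\alpha)$ across the delay junctions $t=k\mu$, which is dispatched by noting that the left and right values of $u^\mu$ at each such point coincide by construction.
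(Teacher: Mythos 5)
Your proposal is correct and follows essentially the same route as the paper (which defers details to \cite{BGSch17}): Galerkin approximation with $V^\alpha$-energy estimates and the trilinear estimate of Lemma \ref{l2} for the linearized problem \eqref{eq1}, then a step-by-step concatenation on intervals of length $\mu$ for \eqref{D2}, with uniqueness coming from the energy equality for the frozen-coefficient (linear) equation on each interval. The index bookkeeping you flag ($s_1=1+\alpha$, $s_2=\alpha$ or $\alpha-1$, $s_3=-\alpha$ or $1-\alpha$, all closing under $\alpha>1/2$) is exactly what the paper uses in its later estimates, so no gap remains.
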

\begin{proof}
Although the proof of this theorem is in the paper \cite{BGSch17}, for the sake of completeness we would like to give here some explanations of how to prove this result. The existence of a weak solution for the linearized problem (\ref{eq1}) is obtained thanks to the use of Galerkin approximations, while the uniqueness relies on an energy equality, based on the fact that $u^\prime \in L_2(0,\mu,V^{-1})$.

To prove existence and uniqueness of a weak solution of (\ref{eq1}) on $[-\mu,T]$, the strategy followed in \cite{BGSch17} consists of solving the problem (\ref{D2}) step by step, in intervals of length $\mu$, where in each step it is used the fact that for (\ref{eq1}) there exists a unique weak solution. As a result, a sequence $\{(u_k^\mu)\}_{k\in \NN}\subset L_2(-\mu,\mu,V^{1+\alpha})$ is built, with $u^\mu_k(\mu)\in V^\alpha$ and $u^\mu_k(\cdot-\mu)=u^\mu_{k-1}(\cdot)\in L_2(0,\mu,V^{1+\alpha})$,  for any $k\in \NN$. Concatenating the elements of the sequence the global solution of (\ref{D2}) is constructed, having the following expression 
\begin{align}
\label{sol}
u^\mu(t)=\left\{\begin{array}{lll}
    \phi(t)&\mbox{ if}& t\in[-\mu,0),\\
           u_0&\mbox{ if}& t=0,\\
        u^\mu_1(t)&\mbox{ if}& t\in [0,\mu],\\
     u^\mu_2(t-\mu)&\mbox{ if}& t\in [\mu,2\mu],\\
     \vdots&&\vdots \\
      u^\mu_k(t-(k-1)\mu)&\mbox{ if}& t\in [(k-1)\mu,T],
    \end{array}
    \right.
\end{align}
assuming that $t\in [0,T]$, with $T\in  ((k-1)\mu, k\mu].$

\end{proof}

\section{Regularization of weak solutions}\label{s3}

In this section, we are going to show that assuming $f\in V^\alpha$ then the solution $u^\mu$ to (\ref{D2}) is more regular. Thanks to this regularity, we will obtain a suitable compact property that will be further necessary to establish the existence of an attractor for the delayed Navier--Stokes equations.\\

From now on, we denote $\yY^\mu_\alpha=L_2(-\mu,0,V^{1+\alpha})\times V^\alpha$.

\begin{lemma}\label{a1}
Assume that $(\phi, u_0) \in \mathcal{Y}^\mu_\alpha$ and $f\in V^\alpha$. Then $u^\mu(t)\in V^{1+\alpha}$, for $t>0$.
\end{lemma}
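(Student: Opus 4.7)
My strategy is to reduce the claim to the linearized equation \eqref{eq1} and to perform an energy estimate at the $V^{1+\alpha}$ level, using a time weight $t$ to compensate for the fact that $u_0$ lies only in $V^\alpha$. I would first work on $[0,\mu]$ with $\psi\in L_2(0,\mu,V^{1+\alpha})$, which corresponds to the first step of the concatenation \eqref{sol}. Once the regularization is proved on $(0,\mu]$, an induction on the intervals $[k\mu,(k+1)\mu]$ transfers it to $u^\mu$: on each such interval $u^\mu$ solves an equation of type \eqref{eq1} with convection field $u^\mu(\cdot-\mu)\in L_2(V^{1+\alpha})$ (from Theorem~\ref{t1}) and with initial value $u^\mu(k\mu)\in V^{1+\alpha}$ from the previous step, so the same computation applies.

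The core estimate would be carried out on the Galerkin approximations $u_n$. Testing the Galerkin equation against $A^{1+\alpha}u_n$ (which lies in the Galerkin span because the $e_i$ diagonalize $A$) and using $\langle g, A^{1+\alpha}u_n\rangle=(A^{\alpha/2}g, A^{1+\alpha/2}u_n)$ gives
\begin{equation*}
\frac{1}{2}\frac{d}{dt}\|u_n\|_{1+\alpha}^2 + \nu\|u_n\|_{2+\alpha}^2 = -\langle B(\psi,u_n), A^{1+\alpha}u_n\rangle + \langle f, A^{1+\alpha}u_n\rangle.
\end{equation*}
The source term is handled by Cauchy--Schwarz and Young: $|\langle f,A^{1+\alpha}u_n\rangle|\leq\|f\|_\alpha\|u_n\|_{2+\alpha}\leq\tfrac{\nu}{4}\|u_n\|_{2+\alpha}^2+\tfrac{1}{\nu}\|f\|_\alpha^2$. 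For the trilinear term I would apply Lemma~\ref{l2} in the form \eqref{eq7} with the exponents $s_1=1+\alpha$, $s_2=\alpha$, $s_3=-\alpha$; these satisfy $s_i+s_j\ge 0$ for $i\neq j$ and $s_1+s_2+s_3=1+\alpha>3/2$ precisely because $\alpha>1/2$, producing $\|B(\psi,u_n)\|_\alpha\leq c\|\psi\|_{1+\alpha}\|u_n\|_{1+\alpha}$. Another application of Young's inequality yields
\begin{equation*}
\frac{d}{dt}\|u_n\|_{1+\alpha}^2 + \nu\|u_n\|_{2+\alpha}^2 \leq \frac{2c^2}{\nu}\|\psi\|_{1+\alpha}^2\|u_n\|_{1+\alpha}^2 + \frac{2}{\nu}\|f\|_\alpha^2.
\end{equation*}

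Since in general $u_0\notin V^{1+\alpha}$, Gronwall cannot be applied directly to this inequality. I would instead multiply it by $t$ and work with $g_n(t):=t\|u_n(t)\|_{1+\alpha}^2$, which satisfies
\begin{equation*}
g_n'(t) \leq \|u_n(t)\|_{1+\alpha}^2 + \frac{2c^2}{\nu}\|\psi(t)\|_{1+\alpha}^2\,g_n(t) + \frac{2t}{\nu}\|f\|_\alpha^2.
\end{equation*}
Because $\psi\in L_2(0,\mu,V^{1+\alpha})$ and $\|u_n\|_{L_2(0,\mu,V^{1+\alpha})}$ is bounded uniformly in $n$ (these are the same Galerkin estimates that underlie Theorem~\ref{t1}), Gronwall's inequality now gives $t\|u_n(t)\|_{1+\alpha}^2\leq C$ for all $t\in[0,\mu]$, with $C$ independent of $n$. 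Passing to the limit is routine: at each fixed $t>0$, the sequence $\{u_n(t)\}$ is bounded in $V^{1+\alpha}$ and admits a weakly convergent subsequence whose limit is forced to be $u(t)$ by the convergence already known in weaker topologies, whence $u(t)\in V^{1+\alpha}$. The main obstacle is the selection of the trilinear exponents: one must ensure that the first factor on the right of \eqref{eq7} matches the available regularity of $\psi$, the second matches the quantity one is trying to close Gronwall on, and the third is absorbable by the viscous dissipation, and it is precisely the sharp constraint $\alpha>1/2$ in Lemma~\ref{l2} that allows this choice.
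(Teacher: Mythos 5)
Your proposal is correct and follows essentially the same route as the paper: testing against $A^{1+\alpha}u$, invoking Lemma \ref{l2} with $s_1=1+\alpha$, $s_2=\alpha$, $s_3=-\alpha$, using the time weight $t$ together with Gronwall and the $L_2(V^{1+\alpha})$ bounds on the convection field and the solution, and then iterating over the intervals $[k\mu,(k+1)\mu]$. The only difference is that you carry out the estimate on the Galerkin approximations and pass to the limit by weak compactness, whereas the paper performs the same a priori estimate formally on $u^\mu$ itself; this is a refinement in rigor, not a different argument.
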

\begin{proof}
Let us first assume that $t\in (0,\mu]$. Considering the scalar product with $A^{1+\alpha} u^\mu(t)$ in $V^0$, it can be derived that 
\begin{align*}
\frac{d}{dt}(t\|u^\mu(t)\|^2_{1+\alpha})&=\|u^\mu(t)\|^2_{1+\alpha}+2t \langle \frac{du^\mu}{dt}(t),A^{1+\alpha}u^\mu(t)\rangle\\
&\leq \|u^\mu(t)\|^2_{1+\alpha}-2t \langle A u^\mu(t), A^{1+\alpha} u^\mu(t)\rangle- 2t \langle B(\phi(t-\mu),u^\mu(t)), A^{1+\alpha} u^\mu(t)\rangle +2t \langle f, A^{1+\alpha} u^\mu(t)\rangle\\
&\leq \|u^\mu(t)\|^2_{1+\alpha}-2t \|u^\mu(t)\|_{2+\alpha}^2+2t \|\phi(t-\mu)\|_{s_1}\|u^\mu(t)\|_{s_2+1}\|u^\mu(t)\|_{2+2\alpha+s_3}+2t \|f\|_\alpha \|u^\mu(t)\|_{2+\alpha}\\
&\leq \|u^\mu(t)\|^2_{1+\alpha}+t \|\phi(t-\mu)\|_{1+\alpha}^2\|u^\mu(t)\|_{1+\alpha}^2+t \|f\|_\alpha^2,
\end{align*}
where we have applied Lemma \ref{l2} with $s_1=1+\alpha$, $s_2=\alpha$ and $s_3=-\alpha$ (we remind here that in all the paper $\alpha>1/2$). As a consequence,
\begin{align*}
t\|u^\mu(t)\|^2_{1+\alpha}&\leq  \|u^\mu\|^2_{L_2(0,\mu,V^{1+\alpha})}+\int_{0}^{t}s \|f\|_\alpha^2ds+\int_0^t \|\phi(s-\mu)\|_{1+\alpha}^2 (s \|u^\mu(s)\|_{1+\alpha}^2)ds,
\end{align*}
and in virtue of Gronwall's lemma,
\begin{align*}
t\|u^\mu(t)\|^2_{1+\alpha}&\leq  (\|u^\mu\|^2_{L_2(0,\mu,V^{1+\alpha})}+t \|f\|_\alpha^2) e^{\|\phi\|^2_{L_2(-\mu,0,V^{1+\alpha})}}.
\end{align*}
If now $t\in [\mu,2\mu]$, we can repeat similar steps than before to arrive at
\begin{align*}
\frac{d}{dt}(t\|u^\mu(t)\|^2_{1+\alpha})&\leq \|u^\mu(t)\|^2_{1+\alpha}-2t \langle A u^\mu(t), A^{1+\alpha} u^\mu(t)\rangle- 2t \langle B(u^\mu(t-\mu),u^\mu(t)), A^{1+\alpha} u^\mu(t)\rangle +2t \langle f, A^{1+\alpha} u^\mu(t)\rangle\\
&\leq \|u^\mu(t)\|^2_{1+\alpha}+t \|u^\mu(t-\mu)\|_{1+\alpha}^2\|u^\mu(t)\|_{1+\alpha}^2+t \|f\|_\alpha^2,
\end{align*}
and integrating
\begin{align*}
t\|u^\mu(t)\|^2_{1+\alpha}&\leq  \mu \|u^\mu(\mu)\|^2_{1+\alpha}+\|u^\mu\|^2_{L_2(\mu,2\mu,V^{1+\alpha})}+(t-\mu) \|f\|_\alpha^2+\int_\mu^t \|u^\mu(s-\mu)\|_{1+\alpha}^2 (s \|u^\mu(s)\|_{1+\alpha}^2)ds,
\end{align*}
hence
\begin{align*}
t\|u^\mu(t)\|^2_{1+\alpha}&\leq  (\mu (\|u^\mu(\mu)\|^2_{1+\alpha} +\|f\|_\alpha^2)+\|u^\mu\|^2_{L_2(\mu,2\mu,V^{1+\alpha})})e^{\int_\mu^t \|u^\mu(s-\mu)\|_{1+\alpha}^2 ds}\\
&= (\mu (\|u^\mu(\mu)\|^2_{1+\alpha} +\|f\|_\alpha^2)+\|u^\mu\|^2_{L_2(\mu,2\mu,V^{1+\alpha})})e^{\|u^\mu\|^2_{L_2(0,\mu,V^{1+\alpha})}}.
\end{align*}
It is clear that due to the regularity of the weak solution $u^\mu$ we can repeat this procedure in any interval. This completes the proof.
\end{proof}

We can also establish the following regularity result:
\begin{lemma}\label{a2}
Assume that $(\phi, u_0) \in \mathcal{Y}^\mu_\alpha$ and $f\in V^\alpha$. Then for every $\eps>0$, the solution of (\ref{D2}) satisfies  $u^\mu \in L_\infty(\eps,T,V^{1+\alpha})\cap L_2(\eps,T,V^{2+\alpha})$.
\end{lemma}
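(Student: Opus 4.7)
The plan is to use the pointwise regularity $u^\mu(t)\in V^{1+\alpha}$ for $t>0$ from Lemma \ref{a1} as the basis for a standard $V^{1+\alpha}$-energy estimate on $[\eps,T]$, thereby upgrading pointwise regularity to the $L_\infty$-in-time and $L_2$-in-time estimates stated.

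First, for a fixed $\eps>0$, Lemma \ref{a1} yields $u^\mu(\eps)\in V^{1+\alpha}$ with an explicit (though $\eps$-dependent) bound. I would then test \eqref{D2} against $A^{1+\alpha}u^\mu(t)$ in $V^0$, using $\langle Au^\mu(t),A^{1+\alpha}u^\mu(t)\rangle=\|u^\mu(t)\|_{2+\alpha}^2$, to arrive at the formal identity
\begin{equation*}
\frac{1}{2}\frac{d}{dt}\|u^\mu(t)\|_{1+\alpha}^2 + \nu\|u^\mu(t)\|_{2+\alpha}^2 = -\langle B(u^\mu(t-\mu),u^\mu(t)),A^{1+\alpha}u^\mu(t)\rangle + \langle f,A^{1+\alpha}u^\mu(t)\rangle.
\end{equation*}
As in the proof of Lemma \ref{a1}, I would invoke Lemma \ref{l2} with $s_1=1+\alpha$, $s_2=\alpha$, $s_3=-\alpha$ (the required conditions are satisfied since $\alpha>1/2$ and $\|A^{1+\alpha}u^\mu(t)\|_{-\alpha}=\|u^\mu(t)\|_{2+\alpha}$) to bound the trilinear term by $c\|u^\mu(t-\mu)\|_{1+\alpha}\|u^\mu(t)\|_{1+\alpha}\|u^\mu(t)\|_{2+\alpha}$, and the forcing term by $\|f\|_\alpha\|u^\mu(t)\|_{2+\alpha}$. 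Two applications of Young's inequality (absorbing a fraction of $\nu\|u^\mu(t)\|_{2+\alpha}^2$ into the dissipation) produce
\begin{equation*}
\frac{d}{dt}\|u^\mu(t)\|_{1+\alpha}^2 + \nu\|u^\mu(t)\|_{2+\alpha}^2 \leq C\|u^\mu(t-\mu)\|_{1+\alpha}^2\|u^\mu(t)\|_{1+\alpha}^2 + C\|f\|_\alpha^2.
\end{equation*}

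Integrating from $\eps$ to any $t\in[\eps,T]$ and applying Gronwall's lemma with the integrable weight $s\mapsto\|u^\mu(s-\mu)\|_{1+\alpha}^2$ (whose integrability on $[0,T]$ follows from the hypothesis $\phi\in L_2(-\mu,0,V^{1+\alpha})$ together with the $L_2(0,T,V^{1+\alpha})$ regularity from Theorem \ref{t1}) delivers a bound on $\|u^\mu(t)\|_{1+\alpha}^2$ uniform in $t\in[\eps,T]$, which is exactly $u^\mu\in L_\infty(\eps,T,V^{1+\alpha})$. Reinserting this $L_\infty$ bound into the integrated inequality gives
\begin{equation*}
\nu\int_\eps^T\|u^\mu(s)\|_{2+\alpha}^2\,ds \leq \|u^\mu(\eps)\|_{1+\alpha}^2 + C(T-\eps)\|f\|_\alpha^2 + C\|u^\mu\|_{L_\infty(\eps,T,V^{1+\alpha})}^2\int_{-\mu}^{T-\mu}\|u^\mu(s)\|_{1+\alpha}^2\,ds,
\end{equation*}
which is the desired $L_2(\eps,T,V^{2+\alpha})$ bound.

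The main (and essentially only) obstacle is technical: formally pairing the equation with $A^{1+\alpha}u^\mu$ presupposes the very regularity we are trying to establish. This is handled in the standard way by carrying out the computation at the level of the Galerkin approximations used in Theorem \ref{t1}, where $A^{1+\alpha}u^\mu_n$ is well defined, deriving the bounds uniformly in $n$, and passing to the limit by weak/weak-$\ast$ compactness. The argument is then propagated to every interval $[k\mu,(k+1)\mu]$ by replacing $\phi(t-\mu)$ with $u^\mu(t-\mu)$, exactly as in the iteration at the end of the proof of Lemma \ref{a1}.
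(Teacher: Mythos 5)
Your proof is correct and follows essentially the same route as the paper: the same $V^{1+\alpha}$ energy estimate obtained by pairing the equation with $A^{1+\alpha}u^\mu$, with the trilinear term controlled through Lemma \ref{l2} for $s_1=1+\alpha$, $s_2=\alpha$, $s_3=-\alpha$, Young's inequality to absorb the dissipation, and integration over $[\eps,T]$. The only (harmless) difference is that the paper uses the uniform bound $\sup_{t\in[\eps,T]}\|u^\mu(t)\|_{1+\alpha}^2<\infty$ coming out of the proof of Lemma \ref{a1} to handle the trilinear term directly on successive intervals, whereas you re-derive the $L_\infty(\eps,T,V^{1+\alpha})$ bound from the single value $u^\mu(\eps)\in V^{1+\alpha}$ by Gronwall with the integrable weight $\|u^\mu(\cdot-\mu)\|_{1+\alpha}^2$ before extracting the $L_2(\eps,T,V^{2+\alpha})$ estimate.
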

\begin{proof}
The proof is based on the regularity properties of the weak solution together with the fact that, as a consequence of Lemma \ref{a1}, we know that for any $\eps>0$
$$\sup_{t\in[\eps,T]}\|u^\mu(t)\|_{1+\alpha }^2 <\infty.$$
Indeed, as in the previous proof, assume first that $t\in [0,\mu]$. Then
\begin{align*}
  \frac{d}{dt}\|u^\mu(t)\|_{1+\alpha}^2+2\nu\|u^\mu(t)\|_{2+\alpha}^2&\le 2c\|u^\mu(t-\mu)\|_{1+\alpha }\|u^\mu(t)\|_{1+\alpha }\|A^{1+\alpha}u^\mu(t)\|_{-\alpha} +\frac{2}{\nu}\|f\|_{\alpha}^2+\frac{\nu}{2}\|u^\mu(t)\|_{2+\alpha}^2\\
  &\le \frac{c^2}{\nu}\|u^\mu(t-\mu)\|_{1+\alpha }^2\|u^\mu(t)\|_{1+\alpha }^2+\nu\|u^\mu(t)\|_{2+\alpha}^2+ \frac{2}{\nu}\|f\|_{\alpha }^2
  +\frac{\nu}{2}\|u^\mu(t)\|_{2+\alpha}^2.
\end{align*}
Above, to estimate the trilinear form, we have taken in Lemma \ref{l2} the parameters $s_1=1+\alpha$, $s_2=\alpha$ and $s_3=-\alpha$.

Hence, by integration,
\begin{align*}
\|u^\mu(t)\|_{1+\alpha}^2+\frac{\nu}{2}\int_\eps ^t \|u^\mu(s)\|_{2+\alpha}^2ds  &\le \|u^\mu(\eps)\|_{1+\alpha}^2 +\frac{c^2}{\nu} \sup_{t\in [\eps,\mu]}\|u^\mu(t)\|_{1+\alpha }^2 \|\phi\|_{L_2(-\mu,0,V^{1+\alpha}) }^2+ \frac{2}{\nu}(t-\eps)\|f\|_{\alpha }^2
\end{align*}
which implies $u^\mu \in L_2(\eps,\mu,V^{2+\alpha})$. Reasoning in a similar way, when $t\in[\mu,2\mu]$, we have
\begin{align*}
\|u^\mu(t)\|_{1+\alpha}^2+\frac{\nu}{2}\int_\mu ^t \|u^\mu(s)\|_{2+\alpha}^2ds  &\le \|u^\mu(\mu)\|_{1+\alpha}^2 +\frac{c^2}{\nu} \sup_{t\in [\mu,2\mu]}\|u^\mu(t)\|_{1+\alpha }^2 \|u^\mu\|_{L_2(0,\mu,V^{1+\alpha}) }^2+ \frac{2}{\nu}(t-\mu)\|f\|_{\alpha }^2
\end{align*}
hence $u^\mu \in L_2(\mu,2\mu,V^{2+\alpha})$. Repeating the same argument we conclude the proof.
\end{proof}

We can also establish the H\"older regularity of the solution.
\begin{lemma}\label{a3}
Assume that $(\phi, u_0) \in \mathcal{Y}^\mu_\alpha$ and $f\in V^\alpha$. Then for every $\eps>0$, the solution of (\ref{D2}) satisfies $u^\mu \in C^\beta([\eps,T],V^{\alpha})$ for $\beta \in (0,1/2]$. 
\end{lemma}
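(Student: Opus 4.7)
The plan is to show that $\frac{du^\mu}{dt}\in L_2(\eps,T,V^\alpha)$; this combined with the fundamental theorem of calculus and Cauchy--Schwarz will immediately yield $\|u^\mu(t)-u^\mu(s)\|_\alpha\le C\sqrt{t-s}$ for $\eps\le s\le t\le T$, which gives $u^\mu\in C^{1/2}([\eps,T],V^\alpha)$ and hence $C^\beta$ for every $\beta\in(0,1/2]$ by trivial inclusion.

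To estimate $\frac{du^\mu}{dt}$ in $V^\alpha$, I would read off from \eqref{D2} that
\begin{equation*}
  \frac{du^\mu}{dt}(t)=f-\nu A u^\mu(t)-B(u^\mu(t-\mu),u^\mu(t))
\end{equation*}
and bound each term separately. The linear and forcing terms are straightforward: since $f\in V^\alpha$ is constant in $t$, its $L_2(\eps,T,V^\alpha)$ norm is trivially finite, and since $\|Au^\mu(t)\|_\alpha=\|u^\mu(t)\|_{2+\alpha}$, Lemma \ref{a2} gives $Au^\mu\in L_2(\eps,T,V^\alpha)$.

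The main task is the nonlinear term. I would apply Lemma \ref{l2} with $s_1=1+\alpha$, $s_2=\alpha$, $s_3=-\alpha$ (whose hypotheses hold thanks to $\alpha>1/2$, as $s_1+s_2+s_3=1+\alpha>3/2$ and all pairwise sums are nonnegative) to obtain
\begin{equation*}
  \|B(u^\mu(t-\mu),u^\mu(t))\|_\alpha\le c\,\|u^\mu(t-\mu)\|_{1+\alpha}\|u^\mu(t)\|_{1+\alpha}.
\end{equation*}
Then Lemma \ref{a2} gives $u^\mu\in L_\infty(\eps,T,V^{1+\alpha})$, while the delayed factor $u^\mu(\cdot-\mu)$ lives in $L_2(\eps-\mu,T-\mu,V^{1+\alpha})$ (this uses the initial datum $\phi\in L_2(-\mu,0,V^{1+\alpha})$ on the initial window $[\eps,\mu]$ and the weak-solution regularity on $[\mu,T]$). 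Combining these two pieces of information yields
\begin{equation*}
  \int_\eps^T\!\!\|B(u^\mu(t-\mu),u^\mu(t))\|_\alpha^2\,dt\le c^2\,\|u^\mu\|_{L_\infty(\eps,T,V^{1+\alpha})}^2\,\|u^\mu\|_{L_2(\eps-\mu,T-\mu,V^{1+\alpha})}^2<\infty,
\end{equation*}
completing the required $L_2$ bound for $\frac{du^\mu}{dt}$ in $V^\alpha$.

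The only subtlety I foresee is the bookkeeping on $[\eps,\mu]$, where the delayed argument $u^\mu(t-\mu)$ equals the history function $\phi(t-\mu)$ that is merely $L_2$ (not $L_\infty$) in time; this is precisely why the estimate is split asymmetrically, placing the $L_\infty$ norm on the factor $u^\mu(t)$ that lives on $[\eps,T]$ and the $L_2$ norm on the delayed factor. Once this pairing is set up, the argument is a direct interpolation-free Hölder estimate via integration of $\frac{du^\mu}{dt}$.
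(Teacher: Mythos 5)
Your proposal is correct and follows essentially the same route as the paper: both write the increment by integrating the equation on $[\eps,T]$, use Cauchy--Schwarz to produce the factor $(t-s)^{1/2}$, control $Au^\mu$ through the $L_2(\eps,T,V^{2+\alpha})$ regularity of Lemma \ref{a2}, and estimate the convective term via Lemma \ref{l2}. The only (harmless) deviation is the exponent choice in Lemma \ref{l2}: you take $s_2=\alpha$ and pair $\|u^\mu\|_{L_\infty(\eps,T,V^{1+\alpha})}$ with the delayed factor in $L_2(\eps-\mu,T-\mu,V^{1+\alpha})$, while the paper takes $s_2=1+\alpha$ and pairs a sup of the $V^{1+\alpha}$-norm with $\|u^\mu\|_{L_2(\eps,T,V^{2+\alpha})}$; your pairing in fact handles the window $[\eps,\mu]$, where the delayed argument is only the $L_2$ history $\phi$, more carefully.
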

\begin{proof}
Consider $\eps \leq s <t\leq T$. Then
$$\|u^\mu (t)-u^\mu(s)\|_\alpha \leq \nu (t-s)^{1/2} \bigg(\int_\eps^T \|Au(r)\|_\alpha^2 dr \bigg)^{1/2}+ (t-s)^{1/2}\bigg(\int_\eps^T \|B(u^\mu(r-\mu),u(r))\|_\alpha^2 dr \bigg)^{1/2}.$$
On the one hand,
$$\int_\eps^T \|Au(r)\|_\alpha^2 dr =\int_\eps^T \|A^{\frac{\alpha}{2}}Au(r)\|^2_0 dr=\int_\eps^T \|u(r)\|_{2+\alpha}^2dr <\infty,$$
thanks to Lemma \ref{a2}. On the other hand,
$$\int_\eps^T \|B(u^\mu(r-\mu),u(r))\|_\alpha^2 dr \leq  \sup_{t\in[\eps,T]} \|u(t)\|_{1+\alpha}^2 \|u\|^2_{L_2(\eps,T,V^{2+\alpha})}<\infty,$$
just taking $s_3=-\alpha$, $s_1=1+\alpha$ and $s_2=1+\alpha$ in Lemma \ref{l2}. 
\end{proof}

\section{Discrete and Continuous Dynamical Flows}\label{s4}

As pointed out above in the Introduction, in this paper we are interested in investigating the longtime behavior of the delayed Navier-Stokes equations (\ref{D2}). As in the study of the existence and uniqueness of solutions for (\ref{D2}), the analysis of its longtime behavior is based on the study of its corresponding linearized system. Hence, we first consider the linearized system on the whole positive real line and introduce its associated discrete flow $U$. Then, we establish a crucial relationship between $U$ and and the continuous flow $S$ related to (\ref{D2}), see (\ref{rs1}) below.

We consider the solution of (\ref{eq1}) to any compact interval. We can rewrite (\ref{eq1}) as
\begin{equation}\label{eq1b1}
  \left\{
  \begin{aligned}
    du_1(t)&+(\nu Au_1(t)+B(\psi(t), u_1(t)))dt=fdt ,\quad&t \in [0,\mu],&\\
    u_1(0)&=u_{0}.
  \end{aligned}
  \right.
\end{equation}
and consider generalizations of the above problem given for $k=2, 3, \cdots$ by
 \begin{equation}\label{eq1b2}
  \left\{
  \begin{aligned}
    du_k(t)&+(\nu Au_k(t)+B(u_{k-1}(t), u_k(t)))dt=fdt ,\quad&t \in [0,\mu],&\\
    u_k(0)&=u_{k-1}(\mu).
  \end{aligned}
  \right.
\end{equation}
It turns out that we can construct a sequence $\{(u_k)\}_{k\in \NN}\subset L_2(0,\mu,V^{1+\alpha})$ such that, for any $k\in \NN$, $u_k(\mu)\in V^\alpha$. Concatenating the elements of this sequence we can define the function $u$ given by
\begin{align}
\label{sol}
u(t)=\left\{\begin{array}{lll}
           u_0&\mbox{ if}& t=0,\\
        u_1(t)&\mbox{ if}& t\in [0,\mu],\\
     u_2(t-\mu)&\mbox{ if}& t\in [\mu,2\mu],\\
     \vdots&&\vdots \\
      u_k(t-(k-1)\mu)&\mbox{ if}& t\in [(k-1)\mu,T],
    \end{array}
    \right.
\end{align}
assuming that $t\in [0,T]$, with $T\in  ((k-1)\mu, k\mu]$. Therefore, we have constructed $u$ to be the solution of the linearized Navier-Stokes equations (\ref{eq1b1})-(\ref{eq1b2}) for $t\geq 0$. Due to the above construction, Lemma \ref{a1}, Lemma \ref{a2} and Lemma \ref{a3} can be also established for the solution (\ref{sol}).\\


Furthermore, we consider the dynamical system defined by $u^\mu$ and the dynamical system defined by $u$, and analyze the relationship between them. To be more precise, from now on we consider the two Hilbert spaces 
$$\mathcal{X}^\mu_\alpha=L_2(0,\mu,V^{1+\alpha})\times V^\alpha, \quad \mathcal Y^\mu_\alpha=L_2(-\mu,0,V^{1+\alpha})\times V^\alpha.$$

If $(x_1,x_2) \in \mathcal{X}^\mu_\alpha$ and $(y_1,y_2) \in \mathcal{Y}^\mu_\alpha$, the symbol
$$(x_1,x_2) \cong (y_1,y_2)$$
means that 
 \begin{equation*}
  \begin{aligned}
 x_1(\cdot)&=y_1(\cdot-\mu) \;{\text on \;} [0,\mu], \\
x_2&=y_2. 
  \end{aligned}
\end{equation*}
Therefore, $(\psi,u_0) \cong (\phi,u_0)$ means that $\psi(\cdot):=\phi(\cdot-\mu)$.\\

Notice that if $(\psi,u_0) \cong (\phi,u_0)$ then $u_1^\mu(t)=u_1(t),$ $t\in [0,\mu]$, which also implies
$$(u_1^\mu)_\mu(t-\mu)=u_1^\mu(t)=u_1(t),\; t\in [0,\mu],$$
that is,
$$(\psi,u_0) \cong (\phi,u_0) \; \Rightarrow (u_1,u_1(\mu)) \cong ((u_1^\mu)_\mu,u_1^\mu(\mu)).$$
By induction, for any $k\in \NN$ we obtain
\begin{equation}\label{con}
(\psi,u_0) \cong (\phi,u_0) \; \Rightarrow (u_k,u_k(\mu)) \cong ((u_k^\mu)_\mu,u_k^\mu(\mu)).
\end{equation}

Define for $n\in \NN$ and $t\in[0,T]$ the mappings $U(n,\cdot):\mathcal{X}^\mu_\alpha \to \mathcal{X}^\mu_\alpha$ and $S(t,\cdot):\mathcal Y^\mu_\alpha \rightarrow \mathcal Y^\mu_\alpha$, given respectively by
\begin{equation*}
U(n,(\psi,u_0))=(u|_{((n-1)\mu, n\mu]},u(n \mu)), \quad S(t,(\phi,u_0))=((u^\mu)_t, u^\mu(t)),
\end{equation*}
where $u$ is defined by \eqref{sol}  for $(\psi,u_0)\in  \mathcal X^\mu_\alpha$, and $u_t^\mu$ is the segment function defined by $(u^\mu)_t(s)=u^\mu(t+s)$, $s\in (-\mu,0)$, where $u^\mu$ is the weak solution to \eqref{D2} corresponding to the initial data $(\phi,u_0)\in \mathcal Y^\mu_\alpha$. Note that $U$ can be defined simply considering compositions of $U(1,\cdot)$ with itself. In fact, we can consider the one-step function $U(1, (\psi,u_0))=(u_1,u_1(\mu))$ and compose it with itself $n$ times, getting
\begin{align*}
\begin{split}
U(n, (\psi,u_0))=U(1,\cdot)\circ U(1,\cdot) \circ \cdots \circ U(1,(\psi,u_0)).
\end{split}
\end{align*}
It was proven, see  \cite{BGSch17}, that the discrete dynamical system $U(n,\cdot)$ is a continuous mapping on $\mathcal{X}^\mu_\alpha$ while $S(t,\cdot)$ is continuous on $\mathcal Y^\mu_\alpha$.\\

Now we can rewrite (\ref{con}) to establish the relationship between the discrete and continuous dynamical systems $U$ and $S$.  If $(\psi,u_0) \cong (\phi,u_0)$,
\begin{align*}
U(n,(\psi,u_0))=(u_n,u_n(\mu))\cong  ((u_{n}^\mu)_\mu, u^\mu_n(\mu))=((u^\mu)_{n\mu}, u^\mu(n\mu))= S(n\mu,(\phi,u_0)),
\end{align*}
or, in other words,
\begin{align}\label{rs1}
(\psi,u_0) \cong (\phi,u_0) \; \Rightarrow   U(n,(\psi,u_0)) \cong S(n\mu,(\phi,u_0)).
\end{align}

\begin{remark}\label{rn}
Our aim in the next section is to study the longtime behavior for (\ref{D2}). In particular, we will prove that there is an invariant ball $B_{\yY_\alpha^\mu}$ for the continuous semigroup $S(t)$, for $t\geq 0$. Notice that, given $t\geq 0$ there exists $n^\ast \in \NN$ such that $t\in [n^\ast \mu, (n^\ast+1)\mu]$, hence defining $\tau=t-n^\ast \mu \in [0,\mu]$, by the semigroup property
\begin{equation}\label{key1}
S(t,(\phi,u_0))=S(t-n^\ast \mu, S(n^\ast \mu,(\phi,u_0)))=S(\tau,((u_{n^\ast}^\mu)_\mu, u^\mu_{n^\ast}(\mu))).
\end{equation}
Therefore, combining (\ref{rs1}) and (\ref{key1}), it is clear that it is enough to restrict the investigation of the invariant ball for the continuous dynamical system $S$ to the interval $[0,\mu]$. More precisely, the invariance of the ball $B_{\yY_\alpha^\mu}$ will follow in two steps: first, we will find an invariant ball $B_{\xX_\alpha^\mu}$ for the discrete dynamical flow $U$, and then we study the invariance of a ball for $S$ on $[0,\mu]$.

\end{remark}

\section{Longtime behavior for the linearized equation}

As we have said in the Introduction, the main goal of this paper is to investigate the existence of an attractor for the delayed Navier-Stokes equations (\ref{D2}). To do that, we are going to use the relationship (\ref{rs1}). To be more precise, in a first step we consider the discrete dynamical system $U$ and look for the existence of an discrete attractor associated to $U$. The existence of this discrete attractor rests upon the invariance of a ball $B\in \mathcal X^\mu_\alpha$ for $U$ (see Lemma \ref{ib} below) and suitable compact embeddings of some spaces (see Lemma \ref{t1b}).\\

To simplify the presentation, we identify $U(\psi, u_0)$ with $U(1,(\psi, u_0))$. Also, since we believe that confusion is not possible, we drop the subindex and represents the solution by $u$ instead of by $u_1$.

\begin{lemma}\label{ib}
Consider $(\psi,u_0)\in \mathcal X^\mu_\alpha$ and $f\in V^{\alpha-1}$ and assume that the viscosity is large enough. Then for $U(\psi,u_0)=(u_1,u_1(\mu))$ defined in Section \ref{s4} we have
$$U (B_{\mathcal X_\alpha^\mu} (R;\rho))\subset B_{\mathcal X_\alpha^\mu} (R;\rho),$$
where $B_{\mathcal X_\alpha^\mu} (R;\rho):=B_{L_2(0,\mu,V^{1+\alpha})}(0,R)\times B_{V^\alpha}(0,\rho)$, with $R$ and $\rho$ defined by (\ref{cond})-(\ref{rho}) below.
\end{lemma}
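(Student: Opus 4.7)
The strategy is to run a standard $V^\alpha$ energy estimate on the linearized equation \eqref{eq1b1} satisfied by $u=u_1$ and then translate the resulting estimate into the joint invariance of both factors of the ball. I would begin by taking the scalar product in $V^0$ of equation \eqref{eq1b1} with $A^\alpha u(t)$. Using that $\langle Au,A^\alpha u\rangle=\|u\|_{1+\alpha}^2$ and $\|A^\alpha u\|_{-\alpha}=\|u\|_\alpha$, Lemma \ref{l2} with the choice $s_1=1+\alpha$, $s_2=\alpha$, $s_3=-\alpha$ (which is admissible since $\alpha>1/2$ gives $s_1+s_2+s_3=1+\alpha>3/2$) yields
\begin{equation*}
  |\langle B(\psi(t),u(t)),A^\alpha u(t)\rangle|\le c\,\|\psi(t)\|_{1+\alpha}\|u(t)\|_{1+\alpha}\|u(t)\|_\alpha,
\end{equation*}
and the duality bound $|\langle f,A^\alpha u\rangle|\le \|f\|_{\alpha-1}\|u\|_{1+\alpha}$. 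Two applications of Young's inequality to absorb $\|u\|_{1+\alpha}^2$ into the dissipation, followed by the Poincar\'e-type inequality $\|u\|_{1+\alpha}^2\ge \lambda_1\|u\|_\alpha^2$, produce the differential inequality
\begin{equation*}
  \frac{d}{dt}\|u(t)\|_\alpha^2+\tfrac{\nu}{2}\|u(t)\|_{1+\alpha}^2+\tfrac{\nu\lambda_1}{2}\|u(t)\|_\alpha^2 \le \tfrac{2c^2}{\nu}\|\psi(t)\|_{1+\alpha}^2\|u(t)\|_\alpha^2+\tfrac{2}{\nu}\|f\|_{\alpha-1}^2.
\end{equation*}

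Next I would extract the two required bounds from this inequality. Dropping the $\|u\|_{1+\alpha}^2$ term and applying Gronwall's lemma on $[0,\mu]$, using $\int_0^\mu\|\psi(s)\|_{1+\alpha}^2\,ds\le R^2$, yields
\begin{equation*}
  \|u(\mu)\|_\alpha^2 \le \rho^2 e^{-\frac{\nu\lambda_1\mu}{2}+\frac{2c^2R^2}{\nu}}+\tfrac{4}{\nu^2\lambda_1}\|f\|_{\alpha-1}^2\,e^{\frac{2c^2R^2}{\nu}}.
\end{equation*}
Integrating the inequality directly on $[0,\mu]$ and using this pointwise bound to estimate $\sup_{t\in[0,\mu]}\|u(t)\|_\alpha^2$ on the right-hand side produces an analogous upper bound for $\int_0^\mu\|u(t)\|_{1+\alpha}^2\,dt$ of the form $\tfrac{C_1}{\nu}\rho^2+\tfrac{C_2 R^2}{\nu^2}e^{\frac{2c^2R^2}{\nu}}(\rho^2+\tfrac{\mu}{\nu}\|f\|_{\alpha-1}^2)+\tfrac{C_3\mu}{\nu^2}\|f\|_{\alpha-1}^2$. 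I would then impose
\begin{equation*}
  e^{-\frac{\nu\lambda_1\mu}{2}+\frac{2c^2R^2}{\nu}}\le\tfrac12,\qquad \tfrac{8}{\nu^2\lambda_1}\|f\|_{\alpha-1}^2 e^{\frac{2c^2R^2}{\nu}}\le \rho^2,
\end{equation*}
together with the analogous requirement that the $L_2$-bound does not exceed $R^2$; these are the conditions that will be labelled \eqref{cond}--\eqref{rho} in the paper and which determine admissible $R$ and $\rho$.

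Finally I would verify that, whenever $\nu$ is large enough (so that $\nu\lambda_1\mu$ dominates $4c^2R^2/\nu$ and the factors $1/\nu^2$ beat the exponential), the system of inequalities for $R$ and $\rho$ can be solved, e.g.\ by fixing $R$ first so that the exponential is controlled and then choosing $\rho$ large enough that the inhomogeneous contribution from $f$ is absorbed. The main obstacle is precisely this delicate coupling: the bound $\rho^2$ we need on $\|u(\mu)\|_\alpha^2$ appears on the right-hand side only through the exponential factor $e^{\frac{2c^2R^2}{\nu}}$, while $R^2$ controlling $\int_0^\mu\|u\|_{1+\alpha}^2$ depends on $\rho$; these two constraints must be simultaneously satisfied with the \emph{same} pair $(R,\rho)$, which is why the largeness of $\nu$ relative to the data $\lambda_1,\mu,\|f\|_{\alpha-1}$ and the embedding constant $c$ of Lemma \ref{l2} is essential.
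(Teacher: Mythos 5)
Your proposal is correct and follows essentially the same route as the paper: the $V^\alpha$ energy estimate with Lemma \ref{l2} at $s_1=1+\alpha$, $s_2=\alpha$, $s_3=-\alpha$, Young plus Poincar\'e, Gronwall on $[0,\mu]$ for the $V^\alpha$ component, and then integration of the same inequality together with the $\sup_{t\in[0,\mu]}\|u(t)\|_\alpha^2$ bound to control the $L_2(0,\mu,V^{1+\alpha})$ component, with $R,\rho$ fixed by smallness conditions valid for large $\nu$. The only (harmless) deviations are constants and the exact form of the conditions: the paper fixes $\rho$ explicitly via (\ref{rho}) using the cruder bound $e^{c^2R^2/\nu}\le e^{\nu\lambda\mu/2}$, and demands the $L_2$-bound be $\le R^2/2$ rather than $R^2$ so the same ball can later be reused for the continuous flow.
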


\begin{proof} By assumption the viscosity is large enough, hence we can find $R>0$ such that 
\begin{align}\label{cond}
-\frac{\nu \lambda \mu}{2}+ \frac{c^2 R^2}{\nu} <-\ln (2), 
\end{align}
and 
\begin{align}\label{cond2}
\frac{8}{\nu^2 \lambda} \|f\|^2_{\alpha-1} e^{\frac{\nu \lambda \mu}{2}}\left(   \frac{2}{\nu}+  \frac{2c^2R^2}{\nu^2}  (  e^{\frac{c^2 R^2}{\nu}}+ \frac{1}{2})+ \frac{\lambda \mu e^{-\frac{\nu \lambda \mu}{2}} }{2}  \right)\leq \frac{R^2}{2},
\end{align}

where $c$ is the positive constant determined in Lemma \ref{l2} and $\lambda$ denotes the first eigenvalue of $A$. Let us define $\rho>0$ given by 
\begin{align}\label{rho}
\rho^2= \frac{8}{\nu^2 \lambda} \|f\|^2_{\alpha-1} e^{\frac{\nu \lambda \mu}{2}}.
\end{align}

We denote by $pr_i(\cdot)$, $i=1,2$, the projection into the corresponding component.

We start proving that 
\begin{align}\label{p1}
pr_2 U (B_{\mathcal X_\alpha^\mu} (R;\rho))\subset B_{V^\alpha}(0,\rho),
\end{align}
for which we need to prove that if $\|\psi\|_{L_2(0,\mu,V^{1+\alpha})}^2\leq R^2$ and $\|u_0\|_\alpha^2\leq \rho^2$, then $\|u(\mu)\|_\alpha^2\leq \rho^2$.
For $t\in [0,\mu]$, considering the scalar product with $A^{\alpha} u(t)$ in $V^0$,  
 we have
\begin{align*}
  \frac{d}{dt}\|u(t)\|_{\alpha}^2+2\nu\|u(t)\|_{1+\alpha}^2&\le 2c\|\psi(t)\|_{1+\alpha }\|u(t)\|_{\alpha }\|u(t)\|_{1+\alpha} +\frac{2}{\nu}\|f\|_{\alpha-1}^2+\frac{\nu}{2}\|u(t)\|_{1+\alpha}^2\\
  &\le \frac{c^2}{\nu}\|\psi(t)\|_{1+\alpha }^2\|u(t)\|_{\alpha }^2+\nu\|u(t)\|_{1+\alpha}^2+ \frac{2}{\nu}\|f\|_{\alpha-1 }^2
  +\frac{\nu}{2}\|u(t)\|_{1+\alpha}^2,
\end{align*}
where we have applied Lemma \ref{l2} taking $s_3=-\alpha$, $s_1=1+\alpha$ and $s_2=\alpha$. Hence, applying Gronwall's lemma, 
\begin{align}\label{f}
\|u(t)\|_\alpha^2\leq \|u_0\|_\alpha^2 e^{-\frac{\nu \lambda t}{2}+ \frac{c^2}{\nu}\int_0^t\|\psi(r)\|_{1+\alpha}^2dr}+\frac{2}{\nu }\|f\|_{\alpha-1}^2 \int_0^t e^{-\frac{\nu \lambda (t-s)}{2}+ \frac{c^2}{\nu}\int_s^t\|\psi(r)\|_{1+\alpha}^2dr} ds
\end{align}
and in particular we obtain
\begin{align*}
\|u(\mu)\|_\alpha^2\leq \|u_0\|_\alpha^2 e^{-\frac{\nu \lambda \mu}{2}+ \frac{c^2 R^2}{\nu}}+\frac{4}{\nu^2 \lambda }\|f\|_{\alpha-1}^2 e^{\frac{c^2 R^2}{\nu}}(1-e^{-\frac{\nu \lambda \mu}{2}}).
\end{align*}
Since $-\frac{\nu \lambda \mu}{2}+ \frac{c^2 R^2}{\nu} <-\ln (2)<0$, we have that 
$e^{ \frac{c^2 R^2}{\nu} }\leq e^{\frac{\nu \lambda \mu}{2}}$.
Hence,
\begin{align*}
\|u(\mu)\|_\alpha^2\leq \frac{1}{2} \|u_0\|_\alpha^2 +\frac{4}{\nu^2 \lambda }\|f\|_{\alpha-1}^2 e^{\frac{\nu \lambda \mu}{2}},
\end{align*}

thus, if $\|u_0\|_\alpha^2\leq \rho^2$ with $\rho$ defined by (\ref{rho}), we get that $\|u(\mu)\|_\alpha^2 \leq \rho^2$, thus (\ref{p1}) is proved.

Let us prove now that 
\begin{align}\label{p2}
pr_1 U (B_{\mathcal X_\alpha^\mu} (R;\rho))\subset B_{L_2(0,\mu,V^{1+\alpha})}(0,R).
\end{align}
Notice that from the previous estimates we also obtain
\begin{align*}
\frac{\nu}{2} \int_0^\mu \|u(r)\|_{1+\alpha}^2 dr&
\le \|u_0\|_\alpha^2+ \frac{c^2}{\nu} \int_0^\mu \|\psi(r)\|_{1+\alpha }^2\|u(r)\|_{\alpha }^2dr+ \frac{2\mu}{\nu}\|f\|_{\alpha -1}^2,
\end{align*}
and from (\ref{f}) we also know that
\begin{align*}
\sup_{t\in [0,\mu]}\|u(t)\|_\alpha^2 &\leq \|u_0\|_\alpha^2  e^{\frac{c^2 R^2}{\nu}}  \sup_{t\in [0,\mu]} e^{-\frac{\nu \lambda t}{2}}+\frac{2}{\nu }\|f\|_{\alpha-1}^2   e^{\frac{c^2 R^2}{\nu}} \sup_{t\in [0,\mu]} \int_0^t e^{-\frac{\nu \lambda (t-s)}{2}} ds\\
&\leq \|u_0\|_\alpha^2  e^{\frac{c^2 R^2}{\nu}} +\frac{4}{\nu^2 \lambda }\|f\|_{\alpha-1}^2   e^{\frac{c^2 R^2}{\nu}}    \sup_{t\in [0,\mu]} (1-e^{-\frac{\nu \lambda t}{2}})\\
&\leq \|u_0\|_\alpha^2  e^{\frac{c^2 R^2}{\nu}} +\frac{4}{\nu^2 \lambda }\|f\|_{\alpha-1}^2   e^{\frac{c^2 R^2}{\nu}} (1-e^{-\frac{\nu \lambda \mu}{2}})\\
&\leq \rho^2 (  e^{\frac{c^2 R^2}{\nu}}+ \frac{1}{2}).
\end{align*}

Therefore, by (\ref{cond2}),
\begin{align}\label{ae}
\begin{split}
\int_0^\mu \|u(r)\|_{1+\alpha}^2 dr&
\le \frac{2}{\nu} \rho^2+ \frac{2c^2R^2}{\nu^2}  \rho^2 (  e^{\frac{c^2 R^2}{\nu}}+ \frac{1}{2}) + \frac{4\mu}{\nu^2}\|f\|_{\alpha-1 }^2\\
&\leq \frac{2}{\nu} \rho^2+ \frac{2c^2R^2}{\nu^2} \rho^2 (  e^{\frac{c^2 R^2}{\nu}}+ \frac{1}{2})+ \frac{\lambda \mu e^{-\frac{\nu \lambda \mu}{2}} }{2}  \rho^2 \\
&\leq  \rho^2\left(   \frac{2}{\nu}+  \frac{2c^2R^2}{\nu^2}  (  e^{\frac{c^2 R^2}{\nu}}+ \frac{1}{2})+ \frac{\lambda \mu e^{-\frac{\nu \lambda \mu}{2}} }{2}  \right).
\end{split}
\end{align}
We conclude that choosing $\nu$ big enough, we arrive at 
\begin{align*}
\|u\|_{L_2(0,\mu,V^{1+\alpha})}^2 dr \leq \frac{R^2}{2}\leq R^2,
\end{align*}
hence (\ref{p2}) is proved and the proof is finished.
\end{proof}

\begin{remark} \label{lemmar} 
 In the expression (\ref{ae}), we take the left hand side to be smaller than $\frac{R^2}{2}$ but not directly smaller than $R^2$. The reason is that this choice will help us later to show the invariance of a ball for the continuous dynamical system $S$, see Section \ref{s5}. Anyway, $B_{\mathcal X_\alpha^\mu} (R;\rho)$ is invariant for $U$ because starting in $(\psi,u_0) \in B_{\mathcal X_\alpha^\mu} (R;\rho)$ we know that 
$$U (\psi,u_0) \in B_{\mathcal X_\alpha^\mu} \bigg(\frac{R}{\sqrt{2}};\rho\bigg) \subset B_{\mathcal X_\alpha^\mu} (R;\rho),$$
where $B_{\mathcal X_\alpha^\mu} (\frac{R}{\sqrt{2}};\rho):=B_{L_2(0,\mu,V^{1+\alpha})}\bigg(0,\frac{R}{\sqrt{2}}\bigg)\times B_{V^\alpha}(0,\rho)$.\\
\end{remark}

Now we want to establish the existence of a unique discrete attractor associated to $U$. To do that, we are using the following lemma, whose proof can be found in Vishik and Fursikov \cite{FurVis} Chapter IV Theorem 4.1. 

\begin{lemma}\label{t1b}
The space $L_2(s,t,V^{2+\alpha})\cap C^\beta([s,t],V^{\alpha})$ is compactly embedded into $L_2(s,t,V^{\alpha})\cap C([s,t],V^{\alpha})$.\\
\end{lemma}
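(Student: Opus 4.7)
The statement is a standard Aubin--Lions/Ascoli-type compactness result; I would prove it by extracting convergent subsequences in the stronger topology $C([s,t],V^\alpha)$ via Arzel\`a--Ascoli, and then deducing the $L_2(s,t,V^\alpha)$ convergence for free. So let $(u_n)$ be a sequence bounded in $L_2(s,t,V^{2+\alpha})\cap C^\beta([s,t],V^\alpha)$, say with
\[
 \|u_n\|_{L_2(s,t,V^{2+\alpha})}+\sup_{s\le r_1<r_2\le t}\frac{\|u_n(r_1)-u_n(r_2)\|_\alpha}{|r_1-r_2|^\beta}\le M\qquad\text{for all }n\in\NN.
\]
My goal is to produce a subsequence converging in $C([s,t],V^\alpha)$, which immediately gives convergence in $L_2(s,t,V^\alpha)$ because $|t-s|<\infty$ implies
$\|u_n-u\|_{L_2(s,t,V^\alpha)}^2\le(t-s)\|u_n-u\|_{C([s,t],V^\alpha)}^2$.

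\textbf{Equicontinuity.} The uniform $C^\beta$ bound directly gives equicontinuity of the family $(u_n)$ in $C([s,t],V^\alpha)$, since $\|u_n(r_1)-u_n(r_2)\|_\alpha\le M|r_1-r_2|^\beta$ uniformly in $n$.

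\textbf{Pointwise precompactness in $V^\alpha$.} This is the key step. Fix $r\in[s,t]$; I need to show that $\{u_n(r)\}_n$ is relatively compact in $V^\alpha$. Given $h>0$ small (adjusting the direction of integration near the endpoints), decompose
\[
 u_n(r)=\frac{1}{h}\int_r^{r+h}\bigl(u_n(r)-u_n(\tau)\bigr)\,d\tau+\frac{1}{h}\int_r^{r+h}u_n(\tau)\,d\tau=:I_n^{(1)}(h)+I_n^{(2)}(h).
\]
By the H\"older bound, $\|I_n^{(1)}(h)\|_\alpha\le M h^\beta$ uniformly in $n$, so $I_n^{(1)}(h)\to 0$ in $V^\alpha$ as $h\to 0^+$ uniformly in $n$. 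On the other hand, by Cauchy--Schwarz in $V^{2+\alpha}$,
\[
 \|I_n^{(2)}(h)\|_{2+\alpha}\le \frac{1}{h}\cdot h^{1/2}\cdot\|u_n\|_{L_2(s,t,V^{2+\alpha})}\le M h^{-1/2},
\]
so for each fixed $h>0$ the family $\{I_n^{(2)}(h)\}_n$ is bounded in $V^{2+\alpha}$ and hence, by the compact embedding $V^{2+\alpha}\hookrightarrow V^\alpha$ (valid since $s_1<s_2$ implies $V^{s_2}\subset V^{s_1}$ compactly, as recalled in Section \ref{s2}), it is relatively compact in $V^\alpha$. Combining, $\{u_n(r)\}_n$ is totally bounded in $V^\alpha$ (approximate by $I_n^{(2)}(h)$ up to error $Mh^\beta$, then cover the compact set $\overline{\{I_n^{(2)}(h)\}_n}$ by finitely many $V^\alpha$-balls), hence relatively compact.

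\textbf{Conclusion.} By the vector-valued Arzel\`a--Ascoli theorem (applicable since $V^\alpha$ is a complete metric space, the family is equicontinuous, and pointwise relatively compact), there is a subsequence, still denoted $(u_n)$, converging in $C([s,t],V^\alpha)$ to some $u$. As noted above, the $C([s,t],V^\alpha)$ convergence trivially entails convergence in $L_2(s,t,V^\alpha)$, establishing compactness of the embedding.

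\textbf{Main obstacle.} The only nontrivial point is the pointwise precompactness in $V^\alpha$: neither the $L_2(V^{2+\alpha})$ bound alone (which gives no pointwise information) nor the H\"older bound alone (which only gives boundedness in $V^\alpha$, insufficient for compactness) suffices. The averaging trick above bridges the two by using the H\"older continuity to control the oscillation of $u_n$ near $r$, and the $L_2$ bound in the more regular space $V^{2+\alpha}$ to provide a compact approximant.
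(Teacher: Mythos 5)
Your proof is correct, but it takes a different route from the paper, which does not actually prove Lemma \ref{t1b} at all: it simply cites Vishik and Fursikov \cite{FurVis}, Chapter IV, Theorem 4.1. What you give instead is a self-contained Ascoli-type argument: equicontinuity in $C([s,t],V^\alpha)$ from the uniform H\"older bound, pointwise relative compactness in $V^\alpha$ via the averaging decomposition $u_n(r)=\frac1h\int_r^{r+h}(u_n(r)-u_n(\tau))\,d\tau+\frac1h\int_r^{r+h}u_n(\tau)\,d\tau$, where the first term is $O(h^\beta)$ uniformly in $n$ and the second is, for fixed $h$, bounded in $V^{2+\alpha}$ and hence precompact in $V^\alpha$ by the compact embedding $V^{2+\alpha}\hookrightarrow V^\alpha$ recalled in Section \ref{s2}; total boundedness and the vector-valued Arzel\`a--Ascoli theorem then give convergence of a subsequence in $C([s,t],V^\alpha)$, which dominates the $L_2(s,t,V^\alpha)$ convergence on the finite interval. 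All the steps check out (the Cauchy--Schwarz bound $\|I_n^{(2)}(h)\|_{2+\alpha}\le Mh^{-1/2}$ is what makes the compact approximant available, and your remark about integrating backwards near $t$ handles the endpoint issue). This is essentially the standard Simon-style proof of such interpolation--compactness lemmas; what the citation buys the paper is brevity and an exact reference, while your argument buys transparency and shows the lemma needs nothing beyond the compactness of $V^{2+\alpha}\subset V^\alpha$, at the cost of a page of routine analysis. One could even note that your proof shows the limit is attained in $C([s,t],V^\alpha)$ alone, from which the $L_2$ statement is an immediate corollary, so the stated target space is not the bottleneck.
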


As a consequence of the previous results, we can establish one of the main theorems of this article:
\begin{theorem}\label{t2}
Assume that the viscosity is large enough. Then the discrete dynamical system U associated to the linearized 3D Navier-Stokes equations possesses a local attractor $\aA$.


\end{theorem}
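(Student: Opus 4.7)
The plan is to follow the standard scheme for attractors of continuous discrete-time dynamical systems on a Banach space: a continuous semigroup together with a bounded positively invariant set whose forward iterates are eventually precompact produces a unique compact local attractor inside that set. Continuity of $U$ on $\mathcal{X}^\mu_\alpha$ has already been recorded, and Lemma \ref{ib} delivers the positively invariant ball $B := B_{\mathcal{X}^\mu_\alpha}(R;\rho)$. The only genuinely new task is to show that some iterate $U^n(B)$ is relatively compact in $\mathcal{X}^\mu_\alpha$.

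Compactness should be obtained by applying $U$ twice and invoking the regularization results of Section~3. For $(\psi,u_0)\in B$, write $(u_1,u_1(\mu)) = U(\psi,u_0)$; by the linearized analogue of Lemma \ref{a1}, $u_1(\mu)\in V^{1+\alpha}$ with a bound depending only on $R,\rho,\nu,\mu,\|f\|_\alpha$. Apply $U$ a second time: since this iteration starts from a datum already lying in $V^{1+\alpha}$, the estimate of Lemma \ref{a2} can be rerun on the \emph{full} interval $[0,\mu]$ (no time-weight $t$ needed), and Lemma \ref{a3} likewise upgrades, producing uniform bounds
\begin{equation*}
\|u_2\|_{L_\infty(0,\mu,V^{1+\alpha})} + \|u_2\|_{L_2(0,\mu,V^{2+\alpha})} + \|u_2\|_{C^\beta([0,\mu],V^\alpha)} \le M(R,\rho,\nu,\mu,\|f\|_\alpha).
\end{equation*}
From the equation $u_2' = -\nu Au_2 - B(u_1,u_2) + f$ one also obtains $\|u_2'\|_{L_2(0,\mu,V^{\alpha-1})} \le M'$, via Lemma \ref{l2} with $s_1=s_2=\alpha,\ s_3=1-\alpha$. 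An Aubin--Lions argument (the analogue of Lemma \ref{t1b} with the regularity window shifted up by one, using the chain $V^{2+\alpha}\hookrightarrow\hookrightarrow V^{1+\alpha}\hookrightarrow V^{\alpha-1}$) then makes the set of admissible $u_2$ relatively compact in $L_2(0,\mu,V^{1+\alpha})$, while $\{u_2(\mu)\}\subset V^{1+\alpha}$ is precompact in $V^\alpha$. Hence $U^2(B)$ is relatively compact in $\mathcal{X}^\mu_\alpha$.

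With invariance of $B$, continuity of $U$, and relative compactness of $U^2(B)$ in hand, define the local attractor in the classical way by
\begin{equation*}
\mathcal{A} := \bigcap_{n\ge 0}\overline{U^n(B)}.
\end{equation*}
As a nested intersection of non-empty closed subsets of the compact set $\overline{U^2(B)}$, $\mathcal{A}$ is non-empty and compact; continuity of $U$ gives $U(\mathcal{A})=\mathcal{A}$; and the usual tail-sequence argument (any limit point of $U^{n_k}(x_k)$ with $x_k\in B$ must lie in $\mathcal{A}$) shows that $\mathcal{A}$ attracts $B$ and is the unique compact invariant attracting subset of $B$. The main obstacle is the compactness step: Lemma \ref{t1b} as stated lands in $L_2(V^\alpha)$, one notch below the $L_2(V^{1+\alpha})$-factor of $\mathcal{X}^\mu_\alpha$, so one must shift the embedding up by one level of regularity. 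That shift becomes available only after a first application of $U$ has lifted the initial datum from $V^\alpha$ to $V^{1+\alpha}$, which is precisely what allows the Section~3 estimates to close on all of $[0,\mu]$ rather than on $[\varepsilon,\mu]$.
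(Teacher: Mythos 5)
Your proposal is correct and follows the same skeleton as the paper's proof: the invariant ball $B_{\mathcal X_\alpha^\mu}(R;\rho)$ from Lemma \ref{ib}, two applications of $U$ so that the Section~3 regularization becomes uniform on a full period, relative compactness of $U(2,B)$ in $\mathcal X^\mu_\alpha$, and then the standard discrete-time attractor construction (the paper takes $\mathcal K=\overline{U(2,B)}$ as a compact forward invariant set and cites the classical theory, which is the same as your $\omega$-limit intersection since positive invariance makes the sets $U^n(B)$ nested). The one place where you genuinely diverge is the compactness step, and there your treatment is actually more complete: the paper invokes Lemma \ref{t1b}, whose stated target space is $L_2(s,t,V^\alpha)\cap C([s,t],V^\alpha)$ --- one regularity notch below the $L_2(0,\mu,V^{1+\alpha})$ factor of $\mathcal X^\mu_\alpha$, so as written it only yields precompactness of the second component $u(\mu)$ in $V^\alpha$. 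You close this gap by shifting the Aubin--Lions chain up one level ($V^{2+\alpha}\hookrightarrow\hookrightarrow V^{1+\alpha}\hookrightarrow V^{\alpha-1}$), which requires the extra estimate $u_2'\in L_2(0,\mu,V^{\alpha-1})$ that you correctly derive from the equation and Lemma \ref{l2}; an alternative repair, closer in spirit to the paper, is to keep Lemma \ref{t1b} and upgrade the resulting $L_2(V^\alpha)$-convergence to $L_2(V^{1+\alpha})$-convergence by the interpolation $\|v\|_{1+\alpha}\le \|v\|_\alpha^{1/2}\|v\|_{2+\alpha}^{1/2}$ together with the uniform $L_2(0,\mu,V^{2+\alpha})$ bound from Lemma \ref{a2}. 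Your observation that the first iterate lifts the datum into $V^{1+\alpha}$ (with a bound uniform over the ball, by the explicit constants in Lemma \ref{a1}) so that the second iterate's estimates close on all of $[0,\mu]$ is exactly the reason the paper requires $n\ge 2$; note only that, as in Section~3, this step tacitly uses $f\in V^\alpha$ rather than the $f\in V^{\alpha-1}$ of Lemma \ref{ib}.
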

\begin{proof}
According to Lemma \ref{ib} and Remark \ref{lemmar}, since the viscosity is large enough we can find $R$ such that (\ref{cond}) and (\ref{cond2}) hold true, which imply that $U(n,B_{\mathcal X_\alpha^\mu} (R;\rho))\subset B_{\mathcal X_\alpha^\mu} (R;\rho)$, that is, $B_{\mathcal X_\alpha^\mu} (R;\rho)$ is a forward invariant ball, with $\rho$ given by (\ref{rho}).

On the other hand, due to the extra regularity of the solution given by Lemma \ref{a1}, Lemma \ref{a2} and Lemma \ref{a3} (for the solution of the linearized equation), in virtue of Lemma \ref{t1b}, we know that $U(n,B_{\mathcal X_\alpha^\mu} (R;\rho))$ is relatively compact for $n\geq 2$. 
Now defining 
$$\mathcal K:=\overline{U(2, B_{\mathcal X_\alpha^\mu} (R;\rho))}^{\mathcal X^\mu_\alpha}$$ we have that $\mathcal K$ is a forward invariant compact set, hence $U$ possesses a unique local attractor $\aA$ (for a comprehensive presentation of the concept of attractors we refer to the monographs by Babin and Vishik \cite{BaVis92}, Hale
\cite{Hale} or Temam \cite{Tem97}).

\end{proof}

\begin{remark}
Observe that $B_{\mathcal X_\alpha^\mu} (R;\rho)$ is not an absorbing ball but an invariant ball. Hence, we know that $B_{\mathcal X_\alpha^\mu} (R;\rho)$ absorbs elements of itself, but not of any bounded set in $\mathcal X_\alpha^\mu$. Hence, the localness is related to the fact that the initial condition $(\psi,u_0)$ must belong to the ball $B_{\mathcal X_\alpha^\mu} (R;\rho)$.

\end{remark}

\subsection{Single point local attractor}

Now we are interested in finding sufficient conditions that ensure that the local attractor $\mathcal A$ associated to the discrete dynamical system $U$ is a single point.
\begin{theorem}\label{sp}
Assume that the viscosity is large enough. Then the local attractor $\mathcal A$ of Theorem \ref{t2}
consists of a single point.
\end{theorem}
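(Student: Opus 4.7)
The plan is to show that for $\nu$ sufficiently large the one-step map $U=U(1,\cdot)$ is a strict contraction on the invariant ball $B_{\mathcal X_\alpha^\mu}(R;\rho)$ (or on the smaller compact invariant set $\mathcal K$). Once we have this, the invariance $\mathcal A = U(\mathcal A)$ immediately forces $\mathrm{diam}(\mathcal A)=0$: any two points $a,b\in\mathcal A$ can be written $a=U^n a_n'$, $b=U^n b_n'$ with $a_n',b_n'\in\mathcal A$, so $d(a,b)\le \kappa^n\,\mathrm{diam}(\mathcal A)\to 0$.

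Concretely, take two data $(\psi,u_0),(\tilde\psi,\tilde u_0)\in B_{\mathcal X_\alpha^\mu}(R;\rho)$, denote the iterates $(y_n,y_n(\mu))=U(n,(\psi,u_0))$ and $(\tilde y_n,\tilde y_n(\mu))=U(n,(\tilde\psi,\tilde u_0))$ (with $y_0=\psi$, $\tilde y_0=\tilde\psi$), and set $w_n=y_n-\tilde y_n$. Bilinearity of $B$ gives, on $[0,\mu]$,
\begin{equation*}
\partial_t w_n+\nu Aw_n+B(w_{n-1},y_n)+B(\tilde y_{n-1},w_n)=0,\qquad w_n(0)=w_{n-1}(\mu).
\end{equation*}
Testing in $V^\alpha$ with $A^\alpha w_n$, invoking Lemma \ref{l2} with $s_1=1+\alpha$, $s_2=\alpha$, $s_3=-\alpha$, and applying Young's inequality to split the two trilinear terms (absorbing part of the dissipation on the first one and part of $\|w_n\|_\alpha$ on the second, after using Poincar\'e's inequality $\|w_n\|_\alpha^2\le\lambda^{-1}\|w_n\|_{1+\alpha}^2$), I would arrive at a differential inequality of the form
\begin{equation*}
\tfrac{d}{dt}\|w_n\|_\alpha^2+\nu\lambda\|w_n\|_\alpha^2\le \tfrac{C_1}{\nu}\|\tilde y_{n-1}\|_{1+\alpha}^2\|w_n\|_\alpha^2+C_2\|w_{n-1}\|_{1+\alpha}^2\|y_n\|_{1+\alpha}^2 .
\end{equation*}
Integrating with Gronwall between $0$ and $\mu$, using that $\|\tilde y_{n-1}\|_{L_2(0,\mu;V^{1+\alpha})}\le R$ by Lemma \ref{ib} and that trajectories inside $\mathcal K$ are uniformly bounded in $L_\infty(0,\mu;V^{1+\alpha})$ thanks to Lemmas \ref{a1}--\ref{a2} (since the successive initial data lie in $V^{1+\alpha}$), I expect to extract
\begin{equation*}
\|w_n(\mu)\|_\alpha^2+\nu\int_0^\mu\|w_n(s)\|_{1+\alpha}^2\,ds\le \kappa_1\|w_{n-1}(\mu)\|_\alpha^2+\kappa_2\|w_{n-1}\|_{L_2(0,\mu;V^{1+\alpha})}^2,
\end{equation*}
where $\kappa_1\le e^{-\nu\lambda\mu+c^2R^2(1+1/\nu)}$ and $\kappa_2$ depend on $R,\mu,\lambda,\nu$ only.

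Imposing a viscosity threshold analogous to \eqref{cond}--\eqref{cond2} but strong enough to drive $\max(\kappa_1,\kappa_2/\nu)<1$, one concludes that in the equivalent norm $\|(\varphi,v)\|^2:=\nu\|\varphi\|_{L_2(0,\mu;V^{1+\alpha})}^2+\|v\|_\alpha^2$ on $\mathcal X_\alpha^\mu$ the map $U$ is a strict contraction on $\mathcal K$. The diameter argument at the start then yields that $\mathcal A$ reduces to a single point (necessarily the unique fixed point of $U$, which corresponds to a $\mu$-periodic solution, in fact a stationary one for this large-viscosity regime).

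The main obstacle is the factor $\|y_n\|_{1+\alpha}^2$ appearing in the cross term: the invariant ball only provides an $L_2$-in-time bound for this quantity, which is insufficient to absorb $\|w_{n-1}\|_{L_2(V^{1+\alpha})}^2$. The remedy is to work on the smoother compact set $\mathcal K=\overline{U(2,B)}$, on which $y_n$ enjoys uniform $L_\infty(0,\mu;V^{1+\alpha})$ bounds by Lemmas \ref{a1}--\ref{a3}; the delicate book-keeping is checking that these bounds, together with the enlarged $\nu$ threshold, still produce constants $\kappa_1,\kappa_2$ summing to less than~$1$.
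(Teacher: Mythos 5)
Your overall strategy coincides with the paper's: show that the one-step map $U(1,\cdot)$ is a strict contraction on the forward invariant region and then use the invariance of $\mathcal A$ under $U(n,\cdot)$ to force $\mathrm{diam}\,\mathcal A=0$. The difference --- and the source of the obstacle you flag at the end --- is your choice of indices in Lemma \ref{l2} for the cross term $B(w_{n-1},y_n)$ tested against $A^\alpha w_n$. Taking $s_1=1+\alpha$, $s_2=\alpha$, $s_3=-\alpha$ produces the factor $\|y_n\|_{1+\alpha}$, for which the invariant ball only gives an $L_2$-in-time bound, and this is what pushes you towards the smoother set $\mathcal K$ and $L_\infty(0,\mu;V^{1+\alpha})$ bounds. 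The paper avoids this entirely by estimating that term with $s_1=1+\alpha$, $s_2=\alpha-1$, $s_3=1-\alpha$ (admissible since $\alpha>1/2$: $s_2+s_3=0$ and $s_1+s_2+s_3=1+\alpha>3/2$), which yields $c\,\|\psi_1-\psi_2\|_{1+\alpha}\,\|u_1\|_{\alpha}\,\|u_1-u_2\|_{1+\alpha}$; the last factor is absorbed by the viscous dissipation via Young's inequality, and the middle factor is controlled by $\sup_{t\in[0,\mu]}\|u_1(t)\|_{\alpha}^2\le\rho^2$, which comes directly from Lemma \ref{ib} and the invariance of $B_{\mathcal X_\alpha^\mu}(R;\rho)$. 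With that choice the one-step contraction constants depend only on $c,R,\rho,\lambda,\mu,\nu$ and can be made smaller than $1/2$ for $\nu$ large; no higher regularity, no set $\mathcal K$, and no hypothesis $f\in V^\alpha$ are needed at this stage.

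As written, your argument is therefore incomplete precisely at its quantitative core. The detour through $\mathcal K$ is salvageable in principle, but it needs two things you do not supply: (i) a uniform $L_\infty(0,\mu;V^{1+\alpha})$ bound for trajectories emanating from all of $\mathcal K$, which requires extending Lemma \ref{a2} to $\eps=0$ for data in a bounded set of $V^{1+\alpha}$ and checking that the bound survives the closure defining $\mathcal K$; and (ii) verification that the resulting constants $\kappa_1,\kappa_2$ --- which now depend on that smooth bound, hence on $R$, $\rho$, $\|f\|_\alpha$, $\mu$ and on $\nu$ itself (recall that $\rho$ in \eqref{rho} grows with $\nu$) --- can indeed be driven below $1$ by enlarging $\nu$. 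You acknowledge this as ``delicate book-keeping'' but do not carry it out, so the contraction, and hence the singleton property, is not actually established. Redo the difference estimate with the paper's choice of exponents; then your diameter argument closes the proof exactly as in the paper. Finally, the concluding parenthetical that the fixed point corresponds to a stationary solution is an unproved extra claim and should be dropped or justified separately.
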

\begin{proof}
Assume that $u_1$ and $u_2$ are two weak solutions to (\ref{eq1}) with initial conditions given, respectively, by $(\psi_1, u_{0,1})$, $(\psi_2, u_{0,2})\in B_{\mathcal X_\alpha^\mu} (R;\rho)$, the forward invariant ball of Lemma \ref{ib}. Then, the difference $u_1-u_2$ verifies

\begin{align*}
\frac{d}{dt}(u_1(t)-u_2(t))+\nu A(u_1(t)-u_2(t))&=B(\psi_1(t),u_1(t))-B(\psi_2(t),u_2(t))\\
&=B(\psi_1(t)-\psi_2(t),u_1(t))+B(\psi_2(t),u_1(t)-u_2(t)),
\end{align*}

then, multiplying by $A^{\alpha}(u_1(t)-u_2(t))$ in $V^0$ we have

\begin{align*}
 \frac{d}{dt}\|u_1(t)-u_2(t)\|_{\alpha}^2+2\nu\|u_1(t)-u_2(t)\|_{1+\alpha}^2&\le 2c\|\psi_1(t)-\psi_2(t)\|_{1+\alpha }\|u_1(t)\|_{\alpha }\|u_1(t)-u_2(t)\|_{1+\alpha} \\
  &+2c\|\psi_2(t)\|_{1+\alpha }\|u_1(t)-u_2(t)\|_{\alpha }\|u_1(t)-u_2(t)\|_{1+\alpha}\\
  &\le \frac{2c^2}{\nu}\|\psi_1(t)-\psi_2(t)\|_{1+\alpha }^2\|u_1(t)\|_{\alpha }^2+\frac{\nu}{2}\|u_1(t)-u_2(t)\|_{1+\alpha}^2\\&
  + \frac{2c^2}{\nu}\|\psi_2(t)\|_{1+\alpha }^2\|u_1(t)-u_2(t)\|_{\alpha }^2+\frac{\nu}{2}\|u_1(t)-u_2(t)\|_{1+\alpha}^2,
 \end{align*}
(considering in Lemma \ref{l2} $s_1=1+\alpha$, $s_2=\alpha-1=-s_3$ ), which gives 
\begin{align*}
 \frac{d}{dt}\|u_1(t)-u_2(t)\|_{\alpha}^2+\nu\|u_1(t)-u_2(t)\|_{1+\alpha}^2  &\le \frac{2c^2}{\nu}\|\psi_1(t)-\psi_2(t)\|_{1+\alpha }^2\|u_1(t)\|_{\alpha }^2+ \frac{2c^2}{\nu}\|\psi_2(t)\|_{1+\alpha }^2\|u_1(t)-u_2(t)\|_{\alpha }^2
 \end{align*}

and since $\psi_2 \in B_{L_2(0,\mu,V^{1+\alpha})}(0,R)$, applying Gronwall's lemma,
\begin{align*}
\|u_1(t)-u_2(t)\|_{\alpha}^2&\le e^{-\lambda \nu \mu+\frac{2c^2 R^2}{\nu}} \|u_{0,1}-u_{0,2}\|_\alpha^2+\frac{2c^2}{\nu}\int_0^\mu e^{-\lambda \nu (\mu-s)+\frac{2c^2 R^2}{\nu}}   \|\psi_1(s)-\psi_2(s)\|_{1+\alpha }^2\|u_1(s)\|_{\alpha }^2 ds,
 \end{align*}
and because we know that $U(1,B) \subset B$, where $B=B_{\mathcal X_\alpha^\mu} (R;\rho)$, then $$\sup_{s\in [0,\mu]}\|u_1(s)\|_{\alpha }^2 \leq \rho^2$$
and this implies
\begin{align*}
\|u_1(t)-u_2(t)\|_{\alpha}^2&\le e^{-\lambda \nu \mu+\frac{2c^2 R^2}{\nu}} \|u_{0,1}-u_{0,2}\|_\alpha^2+\frac{2c^2 \rho^2}{\lambda \nu^2} (1-e^{-\lambda \nu \mu}) e^{\frac{2c^2 R^2}{\nu}}  \|\psi_1-\psi_2\|_{L_2(0,\mu,V^{1+\alpha})}^2.
 \end{align*}
 
 Furthermore, 
 \begin{align*}
 \nu\|u_1(t)-u_2(t)\|_{1+\alpha}^2&\le \frac{2c^2}{\nu}\|\psi_1(t)-\psi_2(t)\|_{1+\alpha }^2\|u_1(t)\|_{\alpha }^2
+ \frac{2c^2}{\nu} \|\psi_2(t)\|_{1+\alpha }^2\|u_1(t)-u_2(t)\|_{\alpha }^2,
 \end{align*}
 and thus
 \begin{align*}
\nu \int_0^\mu \|u_1(r)-u_2(r)\|_{1+\alpha}^2dr&\le  \|u_{0,1}-u_{0,2}\|_\alpha^2+ \frac{2c^2}{\nu}\int_0^\mu \|\psi_1(r)-\psi_2(r)\|_{1+\alpha }^2\|u_1(r)\|_{\alpha }^2dr\\
&+\frac{2c^2}{\nu} \int_0^\mu  \|\psi_2(r)\|_{1+\alpha }^2\|u_1(r)-u_2(r)\|_{\alpha }^2dr\\
& \le  \|u_{0,1}-u_{0,2}\|_\alpha^2+ \frac{2c^2\rho^2}{\nu} \|\psi_1-\psi_2\|_{L_2(0,\mu,V^{1+\alpha})}^2\\
&+\frac{2c^2}{\nu}\sup_{s\in [0,\mu]} \|u_1(s)-u_2(s)\|_{\alpha }^2  \int_0^\mu \|\psi_2(r)\|_{1+\alpha }^2dr.
\end{align*}
Now, if we divide by $\nu$ we obtain that
 \begin{align*}
\int_0^\mu \|u_1(r)-u_2(r)\|_{1+\alpha}^2dr & \le  \frac{1}{\nu} \|u_{0,1}-u_{0,2}\|_\alpha^2+ \frac{2c^2\rho^2}{\nu^2} \|\psi_1-\psi_2\|_{L_2(0,\mu,V^{1+\alpha})}^2\\
&+\frac{2c^2R^2}{\nu^2}\sup_{s\in [0,\mu]} \|u_1(s)-u_2(s)\|_{\alpha }^2.
\end{align*}

Hence, adding the two estimates yields
\begin{align*}
\|U(1, (\psi_1,u_{0,1}))-U(1, (\psi_2,u_{0,2}))\|_{\mathcal X^\mu_\alpha}^2&\le \big( \bigg(\frac{2c^2R^2}{\nu^2}+1\bigg) \e^{-\lambda \nu \mu+\frac{2c^2 R^2}{\nu^2}} +\frac{1}{\nu}\big) \|u_{0,1}-u_{0,2}\|_\alpha^2\\
&+  \big( \frac{1}{\lambda} \bigg(\frac{2c^2R^2}{\nu^2}+1\bigg)(1-e^{-\lambda \nu \mu}) e^{\frac{2c^2 R^2}{\nu}}+1\big) \frac{2c^2 \rho^2 }{\nu^2} 
\|\psi_1-\psi_2\|_{L_2(0,\mu,V^{1+\alpha})}^2.
\end{align*}

Now, we choose $\nu$ large enough such that 
\begin{align*}
&\bigg(\frac{2c^2R^2}{\nu^2}+1\bigg)\e^{-\lambda \nu \mu+\frac{2c^2 R^2}{\nu^2}} +\frac{1}{\nu}<\frac{1}{2}\\
& \frac{1}{\lambda} \bigg(\frac{2c^2R^2}{\nu^2}+1\bigg) (1-e^{-\lambda \nu \mu}) e^{\frac{2c^2 R^2}{\nu}}+1\big) \frac{2c^2 \rho^2 }{\nu^2}  <\frac{1}{2}.
\end{align*}
Altogether, the above inequalities in turn imply
\begin{align*}
\|U(1, (\psi_1,u_{0,1}))-U(1, (\psi_2,u_{0,2}))\|_{\mathcal X^\mu_\alpha}^2&<\frac{1}{2}(\|u_{0,1}-u_{0,2}\|_\alpha^2+  \|\psi_1-\psi_2\|_{L_2(0,\mu,V^{1+\alpha})}^2)
 \end{align*}
 provided that $\nu$ is sufficiently large. By repeating the same arguments, for any $n\in \NN$ we obtain
\begin{align*}
\|U(n, (\psi_1,u_{0,1}))-U(n, (\psi_2,u_{0,2}))\|_{\mathcal X^\mu_\alpha}^2&<\frac{1}{2^n}(\|u_{0,1}-u_{0,2}\|_\alpha^2+  \|\psi_1-\psi_2\|_{L_2(0,\mu,V^{1+\alpha})}^2).
 \end{align*} 
 In other words, due to the invariance property of the attractor $\aA$ we have obtained
\begin{align*}
\sup_{y_1,y_2\in \aA}\|y_1-y_2\|_{\mathcal X^\mu_\alpha}^2&<\frac{1}{2^n}\sup_{x_1,x_2\in \aA}\|x_1-x_2\|_{\mathcal X^\mu_\alpha}^2, \end{align*}
and, as the right--hand side tends to zero, this implies that $\aA$ is a single point attractor for $U$.

\end{proof}

\section{Longtime behavior for the delayed Navier-Stokes equations}\label{s5}

Now we are in position to establish our main result: the existence and uniqueness of a local attractor for the continuous dynamical system $S$. As mentioned before, the results will be based on the relationship (\ref{rs1}) and the fact that $U$ has a unique local attractor $\aA$. 

Let us recall that
$$B_{\mathcal X_\alpha^\mu} (R;\rho):=B_{L_2(0,\mu,V^{1+\alpha})}(0,R)\times B_{V^\alpha}(0,\rho),$$
$$B_{\mathcal X_\alpha^\mu} \bigg(\frac{R}{\sqrt{2}};\rho \bigg ):=B_{L_2(0,\mu,V^{1+\alpha})}\bigg(0,\frac{R}{\sqrt{2}}\bigg)\times B_{V^\alpha}(0,\rho)$$
with $R$ and $\rho$ defined by (\ref{cond})-(\ref{rho}). Define now
\begin{align}
B_{\mathcal Y_\alpha^\mu} (R;\rho)&:=B_{L_2(-\mu,0,V^{1+\alpha})}(0,R)\times B_{V^\alpha}(0,\rho),\label{b1}\\
B_{\mathcal Y_\alpha^\mu} \bigg(\frac{R}{\sqrt{2}};\rho \bigg )&:=B_{L_2(-\mu,0,V^{1+\alpha})}\bigg(0,\frac{R}{\sqrt{2}}\bigg)\times B_{V^\alpha}(0,\rho).\label{b2}
\end{align}

We would like to show that $B_{\mathcal Y_\alpha^\mu} (R;\rho)$ is a forward invariant ball for $S$. First of all, we prove the following result for the time interval $[0,\mu]$.

\begin{lemma}\label{ll}
Assume the conditions (\ref{cond}) and (\ref{cond2}) and consider $(\phi,u_0)\in B_{\mathcal Y_\alpha^\mu} \bigg(\frac{R}{\sqrt{2}};\rho \bigg )$. Then for $t\in [0,\mu]$ yields
$$S(t,(\phi,u_0))=((u_{1}^\mu)_t,u_1^\mu(t))\in B_{\mathcal Y_\alpha^\mu} (R;\rho).$$ 
\end{lemma}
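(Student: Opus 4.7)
The plan is to reduce both components of $S(t,(\phi,u_0))$ to the linearized estimates already established in the proof of Lemma~\ref{ib}. Observe that on $[0,\mu]$, $u_1^\mu$ satisfies exactly the linearized system \eqref{eq1b1} driven by the history
$$\psi(r):=\phi(r-\mu),\qquad r\in[0,\mu].$$
Crucially, $\|\psi\|_{L_2(0,\mu,V^{1+\alpha})}^2=\|\phi\|_{L_2(-\mu,0,V^{1+\alpha})}^2\le R^2/2$, so $\psi$ lies in the \emph{smaller} ball $B_{L_2(0,\mu,V^{1+\alpha})}(0,R/\sqrt{2})$ and $\|u_0\|_\alpha^2\le\rho^2$; thus the a priori estimates \eqref{f} and \eqref{ae} apply with this tighter bound on $\psi$.

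For the segment component I would write, by a change of variable,
$$\|(u_1^\mu)_t\|_{L_2(-\mu,0,V^{1+\alpha})}^2=\int_{t-\mu}^{0}\|\phi(r)\|_{1+\alpha}^2\,dr+\int_{0}^{t}\|u_1^\mu(r)\|_{1+\alpha}^2\,dr.$$
The first piece is bounded by $\|\phi\|_{L_2(-\mu,0,V^{1+\alpha})}^2\le R^2/2$ directly. For the second, I would invoke inequality \eqref{ae} from Lemma~\ref{ib}: since $\psi\in B_{L_2}(0,R)$ and $\|u_0\|_\alpha^2\le\rho^2$, condition \eqref{cond2} gives $\int_{0}^{\mu}\|u_1^\mu(r)\|_{1+\alpha}^2\,dr\le R^2/2$, a fortiori for the integral up to $t$. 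Summing yields $R^2$. This is exactly the purpose of the extra factor $1/2$ carved out in Remark~\ref{lemmar}.

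For the $V^\alpha$ component I would apply the Gronwall estimate \eqref{f} to $u_1^\mu$ with the same $\psi=\phi(\cdot-\mu)$. Using $\int_0^t\|\psi(s)\|_{1+\alpha}^2\,ds\le R^2/2$ for all $t\in[0,\mu]$, the estimate becomes
$$\|u_1^\mu(t)\|_\alpha^2\le \|u_0\|_\alpha^2 e^{-\nu\lambda t/2+c^2R^2/(2\nu)}+\frac{4}{\nu^2\lambda}\|f\|_{\alpha-1}^2\,e^{c^2R^2/(2\nu)}\bigl(1-e^{-\nu\lambda t/2}\bigr).$$
Substituting the definition \eqref{rho} of $\rho^2$ and using condition \eqref{cond}, which reads $e^{c^2R^2/\nu}\le e^{\nu\lambda\mu/2}/2$, the right-hand side can be rearranged as $\rho^2$ times a factor that one checks is bounded by $1$ uniformly in $t\in[0,\mu]$, completing the proof.

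The main obstacle is precisely this uniform-in-$t$ control in Step~2. The direct application of Gronwall carries the factor $e^{c^2R^2/(2\nu)}$, which is not automatically below $1$ and must be balanced against the decay $e^{-\nu\lambda t/2}$ and the $\rho^2/2$ slack hidden in the definition of $\rho^2$ (via $e^{\nu\lambda\mu/2}$). The key point is that condition \eqref{cond}, together with the tighter bound $\|\phi\|_{L_2}^2\le R^2/2$ afforded by starting in the smaller ball $B_{\mathcal Y_\alpha^\mu}(R/\sqrt{2};\rho)$ rather than in the full $B_{\mathcal Y_\alpha^\mu}(R;\rho)$, is exactly what absorbs the exponential prefactor. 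This explains why the hypothesis imposes the smaller ball as initial datum and why the conclusion only requires membership in the larger one.
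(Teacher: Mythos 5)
Your route is the same as the paper's: on $[0,\mu]$ you identify $u_1^\mu$ with the linearized solution driven by $\psi=\phi(\cdot-\mu)$, split the segment norm as $\int_{t-\mu}^{0}\|\phi\|_{1+\alpha}^2+\int_0^t\|u_1^\mu\|_{1+\alpha}^2$ and use the $R^2/2$ bound coming from \eqref{ae} and \eqref{cond2} (this is exactly the paper's first step and indeed the purpose of Remark \ref{lemmar}), and then you run the Gronwall bound \eqref{f} for the $V^\alpha$ component. The first component is therefore fine. The problem is the last claim of your second step, which is precisely the crux and is left unverified: you assert that, after inserting \eqref{rho} and \eqref{cond}, the right-hand side is $\rho^2$ times a factor ``bounded by $1$ uniformly in $t\in[0,\mu]$''.

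That check does not go through as stated. Inserting \eqref{rho}, your bound reads
\begin{equation*}
\|u_1^\mu(t)\|_\alpha^2\le \rho^2\,e^{\frac{c^2R^2}{2\nu}}\Bigl[\tfrac{\|u_0\|_\alpha^2}{\rho^2}\,e^{-\frac{\nu\lambda t}{2}}+\tfrac12\,e^{-\frac{\nu\lambda\mu}{2}}\bigl(1-e^{-\frac{\nu\lambda t}{2}}\bigr)\Bigr],
\end{equation*}
and for $t$ close to $0$ with $\|u_0\|_\alpha$ close to $\rho$ the bracket is close to $1$, so the factor is close to $e^{\frac{c^2R^2}{2\nu}}>1$: the bound exceeds $\rho^2$ for small $t$. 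Condition \eqref{cond} only beats the prefactor $e^{c^2R^2/\nu}$ against the \emph{full} decay $e^{-\nu\lambda\mu/2}$; at intermediate times the accumulated decay $e^{-\nu\lambda t/2}$ may be negligible while the Gronwall exponent $\frac{c^2}{\nu}\int_0^t\|\phi(r-\mu)\|_{1+\alpha}^2\,dr$ can already be as large as $\frac{c^2R^2}{2\nu}$ (the history may concentrate its $L_2$ mass right after $-\mu$), and nothing in \eqref{cond}, \eqref{cond2} or the smaller initial ball absorbs $e^{\frac{c^2R^2}{2\nu}}$ at such times. This is exactly the point where the paper's own chain of inequalities replaces $e^{-\nu\lambda t/2}$ by $e^{-\nu\lambda\mu/2}$, a substitution legitimate only at $t=\mu$; so while your strategy mirrors the paper's, the uniform-in-$t$ control of the $V^\alpha$ component is the step that genuinely needs an additional argument (for instance a pointwise-in-time balance between $\frac{c^2}{\nu}\int_0^t\|\phi(r-\mu)\|_{1+\alpha}^2\,dr$ and $\frac{\nu\lambda t}{2}$, or enlarging the $V^\alpha$ radius used for intermediate times), and deferring it to ``one checks'' leaves the proof incomplete.
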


\begin{proof}

We only sketch the proof since it is very similar to the one of Lemma \ref{ib}.  Indeed, following the same steps than the ones of Lemma \ref{ib} we can obtain that  $\|u_1^\mu\|^2_{L_2(0,\mu,V^{1+\alpha})} \leq \frac{R^2}{2}$. Hence if $(\phi,u_0)\in B_{\mathcal Y_\alpha^\mu} \bigg(\frac{R}{\sqrt{2}};\rho \bigg )$ we obtain that 
$$\|(u_{1}^\mu)_t\|^2_{L_2(-\mu,0,V^{1+\alpha})} \leq \|\phi\|^2_{L_2(-\mu,0,V^{1+\alpha})}+\|u_1^\mu\|^2_{L_2(0,\mu,V^{1+\alpha})}\leq \frac{R^2}{2}+\frac{R^2}{2}=R^2.$$

For the second component, we should prove that 
$$\|u_1^\mu(t)\|_{\alpha}^2\leq \rho^2.$$

We know that

\begin{align*}
  \frac{d}{dt}\|u_1^\mu(t)\|_{\alpha}^2+2\nu\|u_1^\mu(t)\|_{1+\alpha}^2&\le 2c\|\phi(t-\mu)\|_{1+\alpha }\|u_1^\mu(t)\|_{\alpha }\|u_1^\mu(t)\|_{1+\alpha} +\frac{2}{\nu}\|f\|_{\alpha-1}^2+\frac{\nu}{2}\|u_1^\mu(t)\|_{1+\alpha}^2\\
  &\le \frac{c^2}{\nu}\|\phi(t-\mu)\|_{1+\alpha }^2\|u_1^\mu(t)\|_{\alpha }^2+\nu\|u_1^\mu(t)\|_{1+\alpha}^2+ \frac{2}{\nu}\|f\|_{\alpha-1 }^2
  +\frac{\nu}{2}\|u_1^\mu(t)\|_{1+\alpha}^2,
\end{align*}
hence, applying Gronwall's lemma, 
\begin{align*}\label{f}
\|u_1^\mu(t)\|_\alpha^2&\leq \|u_0\|_\alpha^2 e^{-\frac{\nu \lambda t}{2}+ \frac{c^2}{\nu}\int_0^t\|\phi(r-\mu)\|_{1+\alpha}^2dr}+\frac{2}{\nu }\|f\|_{\alpha-1}^2 \int_0^t e^{-\frac{\nu \lambda (t-s)}{2}+ \frac{c^2}{\nu}\int_s^t\|\phi(r-\mu)\|_{1+\alpha}^2dr} ds\\
&\leq \rho^2 e^{-\frac{\nu \lambda \mu}{2}+ \frac{c^2R^2}{2\nu}}+\frac{2}{\nu}\|f\|_{\alpha-1}^2e^{ \frac{c^2R^2}{2\nu}} 
\int_0^t e^{-\frac{\nu \lambda (t-s)}{2}}ds\\
&\leq \rho^2 e^{-\frac{\nu \lambda \mu}{2}+ \frac{c^2R^2}{2\nu}}+\frac{4}{\nu^2\lambda}\|f\|_{\alpha-1}^2e^{ \frac{c^2R^2}{2\nu}} \\
&\leq \rho^2 e^{-\frac{\nu \lambda \mu}{2}+ \frac{c^2R^2}{\nu}}e^{-\frac{c^2R^2}{2\nu}}+\frac{4}{\nu^2\lambda}\|f\|_{\alpha-1}^2e^{ \frac{\nu\lambda\mu}{2}} e^{ \frac{c^2R^2}{2\nu}-\frac{\nu \lambda \mu}{2}}.
\end{align*}

Now, using the definition of $\rho$ given by (\ref{rho}) and assumption \eqref{cond}, we get that
\begin{equation}
\|u_1^\mu(t)\|_\alpha^2 \leq \frac{1}{2} \rho^2+\frac{1}{4} \rho^2 \leq \rho^2. \end{equation}

\end{proof}

In the next result we prove that $B_{\mathcal Y_\alpha^\mu} (R;\rho)$ is an invariant ball for $S$.

\begin{lemma}\label{ib2}
Assume the viscosity is sufficiently large. Then $B_{\mathcal Y_\alpha^\mu} (R;\rho)$ defined by (\ref{b1}) with $R$ and $\rho$ defined in Lemma \ref{ib}, is forward invariant for $S$, that is, for every $t\geq 0$ and $(\phi, u_0)\in B_{\mathcal Y_\alpha^\mu} (R;\rho)$, we have 
$$S (t, (\phi, u_0))\in B_{\mathcal Y_\alpha^\mu} (R;\rho).$$
\end{lemma}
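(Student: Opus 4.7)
The plan is to combine the semigroup property of $S$ with the identification \eqref{rs1} and the discrete invariance already established for $U$. Given $t \geq 0$, I would decompose $t = n\mu + \tau$ with $n \in \NN \cup \{0\}$ and $\tau \in [0, \mu]$, and treat the cases $n \geq 1$ and $n = 0$ separately.

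For $n \geq 1$, take $(\phi, u_0) \in B_{\mathcal Y_\alpha^\mu}(R; \rho)$ and the associated $(\psi, u_0) \cong (\phi, u_0)$ in $\mathcal X_\alpha^\mu$; this immediately places $(\psi, u_0) \in B_{\mathcal X_\alpha^\mu}(R; \rho)$. Remark \ref{lemmar} sharpens Lemma \ref{ib} to $U(\psi, u_0) \in B_{\mathcal X_\alpha^\mu}(R/\sqrt{2}; \rho)$, and iterating this gives $U(n, (\psi, u_0)) \in B_{\mathcal X_\alpha^\mu}(R/\sqrt{2}; \rho)$ for every $n \geq 1$. Via \eqref{rs1}, this transfers to $S(n\mu, (\phi, u_0)) \in B_{\mathcal Y_\alpha^\mu}(R/\sqrt{2}; \rho)$. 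The semigroup property then yields
\begin{equation*}
S(t, (\phi, u_0)) = S(\tau, S(n\mu, (\phi, u_0))),
\end{equation*}
and since the new initial datum lies in the smaller ball, Lemma \ref{ll} supplies directly $S(t, (\phi, u_0)) \in B_{\mathcal Y_\alpha^\mu}(R; \rho)$.

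The case $n = 0$, $\tau \in [0, \mu]$, cannot invoke Lemma \ref{ll} directly because the initial datum only lies in the full ball. Here I would redo, on $[0, \tau]$, the Gronwall-type estimates that underlie Lemmas \ref{ib} and \ref{ll}: the second component $\|u^\mu(\tau)\|_\alpha^2$ is controlled using $\|\phi\|_{L_2(-\mu, 0, V^{1+\alpha})}^2 \leq R^2$ in the exponential factor and condition \eqref{cond} to absorb it; for the first component one splits
\begin{equation*}
\|(u^\mu)_\tau\|_{L_2(-\mu, 0, V^{1+\alpha})}^2 = \int_{\tau - \mu}^0 \|\phi(r)\|_{1+\alpha}^2\, dr + \int_0^\tau \|u_1^\mu(r)\|_{1+\alpha}^2\, dr,
\end{equation*}
and uses an $L^2$-in-time energy estimate analogous to \eqref{ae} to bound the second integral.

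The main obstacle is precisely this $n = 0$ regime. The segment $(u^\mu)_\tau$ simultaneously retains the tail of the original history $\phi$ and captures the newly generated piece $u_1^\mu$, so a naive combination of the bounds $R^2$ for $\phi$ and $R^2/2$ for $u_1^\mu$ would overshoot the target $R^2$. To close the argument one must tie $\int_0^\tau \|u_1^\mu\|_{1+\alpha}^2\, dr$ to the \emph{complementary} portion $\int_{-\mu}^{\tau - \mu} \|\phi\|_{1+\alpha}^2\, dr$ of the history via Gronwall with a prefactor made small by taking $\nu$ large, and then recombine the two $\phi$-integrals into at most $\|\phi\|_{L_2(-\mu, 0, V^{1+\alpha})}^2 \leq R^2$ up to terms absorbed by \eqref{cond}--\eqref{cond2} and the large-viscosity hypothesis.
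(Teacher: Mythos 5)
Your decomposition $t=n\mu+\tau$ and your treatment of the case $n\geq 1$ --- passing to $(\psi,u_0)\cong(\phi,u_0)$, invoking Lemma \ref{ib} together with Remark \ref{lemmar} to get $U(n,(\psi,u_0))\in B_{\mathcal X_\alpha^\mu}(R/\sqrt{2};\rho)$, transferring this through \eqref{rs1}, and concluding with the semigroup property and Lemma \ref{ll} --- is exactly the argument in the paper, and that part is correct.

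The genuine gap is the case $n=0$, and the repair you sketch cannot be made to work. The energy estimate on $[0,\tau]$ gives
\begin{equation*}
\int_0^\tau\|u_1^\mu(r)\|_{1+\alpha}^2\,dr\;\le\;\frac{2}{\nu}\|u_0\|_\alpha^2+\frac{2c^2}{\nu^2}\Big(\sup_{s\in[0,\tau]}\|u_1^\mu(s)\|_\alpha^2\Big)\int_{-\mu}^{\tau-\mu}\|\phi(r)\|_{1+\alpha}^2\,dr+\frac{4\tau}{\nu^2}\|f\|_{\alpha-1}^2,
\end{equation*}
so the only term you can tie to the complementary portion of the history is the middle one; the contributions coming from $u_0$ and $f$ are not multiplied by $\int_{-\mu}^{\tau-\mu}\|\phi\|_{1+\alpha}^2$ and cannot be absorbed into it, no matter how large $\nu$ is. This is fatal for the full-ball claim on $(0,\mu)$: take $u_0=0$, $f\neq0$, and $\phi$ supported in $[-\mu/2,0]$ with $\|\phi\|^2_{L_2(-\mu,0,V^{1+\alpha})}$ equal (or arbitrarily close) to $R^2$. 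For $0<\tau<\mu/2$ one has $\int_{\tau-\mu}^0\|\phi\|_{1+\alpha}^2=\|\phi\|^2_{L_2(-\mu,0,V^{1+\alpha})}$, while $\int_0^\tau\|u_1^\mu\|_{1+\alpha}^2>0$ because $f\neq 0$ drives the solution away from zero; hence $\|(u^\mu)_\tau\|^2_{L_2(-\mu,0,V^{1+\alpha})}>R^2$, so $S(\tau,(\phi,u_0))$ leaves $B_{\mathcal Y_\alpha^\mu}(R;\rho)$. No large-viscosity choice of the prefactors rescues this, because the required inequality forces $\int_0^\tau\|u_1^\mu\|_{1+\alpha}^2$ to be (nearly) zero when the mass of $\phi$ sits in $[\tau-\mu,0]$.

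You should also know that the paper's own proof does not handle this case either: its reduction via Remark \ref{lemmar} presupposes at least one application of $U$, i.e.\ $n^\ast\geq 1$, so what is actually established there is $S(t)B_{\mathcal Y_\alpha^\mu}(R/\sqrt{2};\rho)\subset B_{\mathcal Y_\alpha^\mu}(R;\rho)$ for $t\in[0,\mu]$ (Lemma \ref{ll}) and, as a consequence, $S(t)B_{\mathcal Y_\alpha^\mu}(R;\rho)\subset B_{\mathcal Y_\alpha^\mu}(R;\rho)$ for $t\geq\mu$. That two-step statement is what the attractor construction in Theorem \ref{t2b} really needs. So the correct way to close your write-up is not to push the $n=0$ Gronwall argument harder, but to prove (and state) exactly this weaker formulation: either restrict the invariance claim to $t\geq\mu$, or assume the history component lies in the ball of radius $R/\sqrt{2}$ during the first delay interval. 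Your splitting of the segment norm is the right bookkeeping; it should lead you to that formulation rather than to full-ball invariance for all $t>0$.
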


\begin{proof}
First of all, notice that if $(\psi,u_0) \cong (\phi,u_0)$ and $(\phi,u_0)\in B_{\mathcal Y_\alpha^\mu} (R;\rho)$ then trivially $(\psi,u_0)\in B_{\mathcal X_\alpha^\mu} (R;\rho)$,  the invariant ball obtained in Lemma \ref{ib}.

Given $t\geq 0$ there exists $n^\ast \in \NN$ such that $t\in [n^\ast \mu, (n^\ast+1)\mu]$ and (\ref{key1}) holds true, that is 
\begin{equation*}
S(t,(\phi,u_0))=S(t-n^\ast \mu, S(n^\ast \mu,(\phi,u_0)))=S(\tau,((u_{n^\ast}^\mu)_\mu, u^\mu_{n^\ast}(\mu))),
\end{equation*}
with $\tau=t-n^\ast \mu \in [0,\mu]$. 

On the other hand, (\ref{rs1}) reads as
\begin{align*}
U(n^\ast,(\psi,u_0))=(u_{n^\ast},u_{n^\ast}(\mu)) \cong ((u_{n^\ast}^\mu)_\mu, u^\mu_{n^\ast}(\mu))=S(n^\ast \mu,(\phi,u_0)) .
\end{align*}
Thanks to the invariance of $B_{\mathcal X_\alpha^\mu} (R;\rho)$ under $U$,  $(\psi,u_0) \in B_{\mathcal X_\alpha^\mu} (R;\rho)$ implies $(u_{n^\ast},u_{n^\ast}(\mu))\in B_{\mathcal X_\alpha^\mu} (\frac{R}{\sqrt{2}};\rho)$, see Remark \ref{lemmar}. This statement together with (\ref{key1}) imply that, in order to see the invariance of $B_{\mathcal Y_\alpha^\mu} (R;\rho)$ under $S$ it is enough to prove that given $(\phi,u_0)\in  B_{\mathcal Y_\alpha^\mu} (\frac{R}{\sqrt{2}};\rho)$ then $S(t,(\phi,u_0))=((u_{1}^\mu)_t,u_1^\mu(t))\in B_{\mathcal Y_\alpha^\mu} (R;\rho)$, when $t\in [0,\mu]$. Now, in virtue of  Lemma \ref{ll}, the result is proven.

\end{proof}

As an immediate consequence, we obtain the existence of the attractor $\aA^\mu$ for $S$:
\begin{theorem}\label{t2b}
Assume the viscosity is sufficiently large. Then the continuous dynamical system $S$ possesses a unique local attractor $\mathcal A^\mu$. Furthermore, under the same choice of the viscosity as in Theorem \ref{sp}, the attractor $\mathcal A^\mu$ of $S$ consists of a single point.
\end{theorem}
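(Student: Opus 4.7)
The plan is to first produce $\mathcal{A}^\mu$ by combining the forward invariant ball of Lemma~\ref{ib2} with the regularization results of Section~3, and then to obtain the singleton property by transferring the contraction underlying Theorem~\ref{sp} from $U$ to $S$ through the identification~\eqref{rs1} together with the semigroup property of $S$. Concretely, I would fix $t_0>\mu$ and observe that for every initial datum in $B_{\mathcal{Y}_\alpha^\mu}(R;\rho)$ the whole trajectory remains there by Lemma~\ref{ib2}; applied to $u^\mu$ on the shifted interval $[t_0-\mu,t_0]$, Lemmas~\ref{a1}, \ref{a2} and~\ref{a3} then provide uniform bounds of $(u^\mu)_{t_0}$ in $L_2(-\mu,0,V^{2+\alpha})$, in $C^\beta([-\mu,0],V^\alpha)$ and in $L_\infty(-\mu,0,V^{1+\alpha})$, and a uniform $V^{1+\alpha}$ bound of $u^\mu(t_0)$. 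An Aubin--Lions compactness argument (in the spirit of Lemma~\ref{t1b}, adapted to target the space $L_2(-\mu,0,V^{1+\alpha})$ of the first component) together with the compact embedding $V^{1+\alpha}\hookrightarrow V^\alpha$ then shows that $\mathcal{K}^\mu:=\overline{S(t_0,B_{\mathcal{Y}_\alpha^\mu}(R;\rho))}^{\mathcal{Y}_\alpha^\mu}$ is compact and still forward invariant, so the standard theory of dissipative continuous semigroups yields the local attractor $\mathcal{A}^\mu=\bigcap_{s\geq 0}\overline{\bigcup_{t\geq s}S(t,\mathcal{K}^\mu)}^{\mathcal{Y}_\alpha^\mu}$.

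For the singleton property, under the viscosity condition of Theorem~\ref{sp} the estimate derived there actually shows that $U(1,\cdot)$ is a strict contraction on $B_{\mathcal{X}_\alpha^\mu}(R;\rho)$, so by the Banach fixed point theorem $\mathcal{A}=\{z^*\}$ with $z^*=(\psi^*,u_0^*)$ the unique fixed point. Let $(\phi^*,u_0^*)\in B_{\mathcal{Y}_\alpha^\mu}(R;\rho)$ be determined by $(\psi^*,u_0^*)\cong(\phi^*,u_0^*)$, i.e.\ $\phi^*(s):=\psi^*(s+\mu)$ on $[-\mu,0]$. Relation~\eqref{rs1} gives
$$S(\mu,(\phi^*,u_0^*))\cong U(1,(\psi^*,u_0^*))=(\psi^*,u_0^*)\cong(\phi^*,u_0^*),$$
so $(\phi^*,u_0^*)$ is a fixed point of $S(\mu,\cdot)$, and uniqueness in the invariant ball follows from the uniqueness for $U$. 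The key observation is that, by the semigroup property, for every $t\geq 0$,
$$S(\mu,S(t,(\phi^*,u_0^*)))=S(t,S(\mu,(\phi^*,u_0^*)))=S(t,(\phi^*,u_0^*)),$$
so $S(t,(\phi^*,u_0^*))\in B_{\mathcal{Y}_\alpha^\mu}(R;\rho)$ is itself a fixed point of $S(\mu,\cdot)$ and must therefore coincide with $(\phi^*,u_0^*)$. Hence $(\phi^*,u_0^*)$ is a fixed point of the entire semigroup $S(t,\cdot)$, $t\geq 0$.

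To conclude, for an arbitrary $(\phi,u_0)\in B_{\mathcal{Y}_\alpha^\mu}(R;\rho)$ with associated $(\psi,u_0)\cong(\phi,u_0)$, the contraction estimate of Theorem~\ref{sp} yields $U(n,(\psi,u_0))\to(\psi^*,u_0^*)$ in $\mathcal{X}_\alpha^\mu$ and, by~\eqref{rs1}, $S(n\mu,(\phi,u_0))\to(\phi^*,u_0^*)$ in $\mathcal{Y}_\alpha^\mu$. Writing a general time as $t=n\mu+\tau$ with $\tau\in[0,\mu]$ and applying a Gronwall-type continuity estimate for $S(\tau,\cdot)$ on the bounded set $B_{\mathcal{Y}_\alpha^\mu}(R;\rho)$ (of exactly the same flavor as those in the proofs of Lemma~\ref{ll} and Theorem~\ref{sp}), uniform in $\tau\in[0,\mu]$, one upgrades this to $S(t,(\phi,u_0))\to(\phi^*,u_0^*)$ as $t\to\infty$. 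This forces $\mathcal{A}^\mu=\{(\phi^*,u_0^*)\}$.

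The principal technical difficulty is anticipated in the existence step: obtaining compactness of $S(t_0,B_{\mathcal{Y}_\alpha^\mu}(R;\rho))$ in the first component requires the regularity of Lemma~\ref{a2} on shifted intervals, Sobolev control of $du^\mu/dt$, and the right Aubin--Lions statement, since Lemma~\ref{t1b} as stated compactly embeds only into an $L_2(V^\alpha)$-type space, while the first factor of $\mathcal{Y}_\alpha^\mu$ carries the stronger $V^{1+\alpha}$ norm. Once the compact forward invariant set is in hand, the singleton property is a clean consequence of the contractivity of $U$ and the semigroup identity.
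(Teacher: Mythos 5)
Your proposal is correct, and its overall strategy coincides with the paper's: the invariant ball of Lemma~\ref{ib2} together with the regularity Lemmas~\ref{a1}--\ref{a3} and the compactness of Lemma~\ref{t1b} give a compact forward invariant set and hence the local attractor, while the singleton structure is transferred from $U$ to $S$ through \eqref{rs1} and Theorem~\ref{sp}. The execution of the singleton step, however, is genuinely different. The paper argues by double inclusion on the attractors themselves: it writes $S(n\mu,(\hat\phi,\hat u_0))\cong U(n,(\hat\psi,\hat u_0))=(\hat\psi,\hat u_0)$ to place $(\hat\phi,\hat u_0)$ in $\mathcal A^\mu$, and conversely asserts $S(n\mu,(\tilde\phi,\tilde u_0))=(\tilde\phi,\tilde u_0)$ for an arbitrary point of $\mathcal A^\mu$ to push it back into $\mathcal A$ — an identity that, strictly speaking, presupposes more than invariance of $\mathcal A^\mu$ and is left implicit. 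You instead extract from Theorem~\ref{sp} that $U(1,\cdot)$ is a strict contraction on $B_{\mathcal X_\alpha^\mu}(R;\rho)$, identify $\mathcal A$ with the unique Banach fixed point, show via the commutation $S(\mu,S(t,\cdot))=S(t,S(\mu,\cdot))$ that the corresponding $(\phi^*,u_0^*)$ is fixed by the whole semigroup, and then prove convergence of every trajectory in the ball using \eqref{rs1} plus a uniform-in-$\tau$ Lipschitz estimate on $[0,\mu]$ (which the Gronwall computations of Lemma~\ref{ll}/Theorem~\ref{sp} do supply). This buys a cleaner, fully justified characterization of $\mathcal A^\mu$ as the $\omega$-limit of the invariant set, at the price of the extra continuity estimate. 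You also flag, correctly, that Lemma~\ref{t1b} as stated only yields compactness in an $L_2(V^\alpha)$-type norm while the first factor of $\mathcal Y^\mu_\alpha$ carries the $V^{1+\alpha}$ norm; the paper glosses over this, and your remedy (the uniform $L_2(V^{2+\alpha})$ bound from Lemma~\ref{a2} combined with interpolation, i.e.\ $\|u\|_{1+\alpha}\le\|u\|_\alpha^{1/2}\|u\|_{2+\alpha}^{1/2}$, or an Aubin--Lions argument targeting $L_2(V^{1+\alpha})$) is exactly what is needed to make both the discrete and the continuous compactness claims airtight.
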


\begin{proof}
The first part of the statement follows as a consequence of the previous results, hence we only sketch the proof of the single point local attractor. Thanks to Theorem \ref{sp} we know that $\mathcal A=\{(\hat \psi, \hat u_0)\}.$ We want to prove that $\mathcal A^\mu=\{(\hat \phi, \hat u_0)\},$ that is, $\mathcal A^\mu$ is a single point attractor that in addition is linked to the single point  attractor $\mathcal A$ by the relation $ (\hat \psi, \hat u_0)\cong (\hat \phi, \hat u_0).$

Consider precisely $(\hat \psi, \hat u_0)\in \xX^\mu_\alpha$ and its corresponding pair $(\hat \phi, \hat u_0)\in \yY^\mu_\alpha$. Then
$$S(n\mu,(\hat \phi, \hat u_0))\cong U(n,(\hat \psi, \hat u_0))=(\hat \psi, \hat u_0) \cong (\hat \phi, \hat u_0),$$
therefore $(\hat \phi, \hat u_0)\in \aA^\mu$.  Conversely, take any pair $(\tilde \phi, \tilde u_0)\in \aA^\mu$ and the corresponding $(\tilde \psi, \tilde u_0)\in \xX^\mu_\alpha$ such that $(\tilde \psi, \tilde u_0)\cong (\tilde \phi, \tilde u_0)$. Then 
$$U(n,(\tilde \psi, \tilde u_0)) \cong S(n\mu ,(\tilde \phi, \tilde u_0))=(\tilde \phi, \tilde u_0) \cong (\tilde \psi, \tilde u_0)$$
thus $(\tilde \psi, \tilde u_0)\in \aA$, that is to say, $(\tilde \psi, \tilde u_0)=(\hat \psi, \hat u_0),$ and then $\aA^\mu =\{(\hat \phi, \hat u_0)\}.$

\end{proof}

{\bf Aknowledgement:} This paper was partially finished while H. Bessaih was visiting the Faculty of Mathematics of the University of Sevilla and the department of Mathematics of Jena and she would like to thank them for a very warm hospitality.  H. Bessaih was partially supported by Simons Foundation grant: 582264, M.J. Garrido-Atienza was partially supported by grant PGC2018-096540-I00.
Finally, the authors would like to thank Prof. Bj\"orn Schmalfu\ss \, for very stimulating discussions on the topic of the paper

\end{document}